\newtheorem{theorem}{Theorem}[section]
\newtheorem{definition}[theorem]{Definition}
\newtheorem{lemma}[theorem]{Lemma}
\newtheorem{proposition}[theorem]{Proposition}
\newcommand{\sgn}{\operatorname{sgn}}
\newtheorem{example}[theorem]{Example}
\theoremstyle{remark}
\newtheorem{remark}[theorem]{Remark}
\title{
\textbf{BOUNDARY CROSSING PROBLEMS AND  FUNCTIONAL TRANSFORMATIONS FOR ORNSTEIN-UHLENBECK PROCESSES}\\
}
\author{Aria Ahari$^1$\footnote{aria.ahari@warwick.ac.uk}, Larbi Alili$^1$\footnote{l.alili@warwick.ac.uk}, Massimiliano Tamborrino$^1$\footnote{massimiliano.tamborrino@warwick.ac.uk}}
\date{\small $^1$ Department of Statistics, University of Warwick, Coventry, CV4 7AL, United Kingdom.}
\begin{document}
\maketitle
\begin{abstract}
We are interested in the law of the first passage time of an Ornstein-Uhlenbeck process to time-varying thresholds. We show that this problem is connected to the laws of the first passage time of the process to members of a two-parameter family of functional transformations of a time-varying boundary. For specific values of the parameters, these transformations appear in a realisation of a standard Ornstein-Uhlenbeck bridge. We provide three different proofs of this connection. The first one is based on a similar result for Brownian motion, the second uses a generalisation of the so-called Gauss-Markov processes and the third relies on the Lie group symmetry method. 
       We investigate the properties of these transformations and study the algebraic and analytical properties of an involution operator which is used in constructing them. We also show that these transformations map the space of solutions of Sturm-Liouville equations into the space of solutions of the associated nonlinear ordinary differential equations. Lastly, we interpret our results through the method of images and give new examples of curves with explicit first passage time densities.   
\end{abstract}
{\bf Keywords:} First passage times; Lie algebras; Sturm-Liouville equations; Orstein-Uhlenbeck; Fokker Planck equation; Ornstein-Uhlenbck bridge; Brownian motion.
\\
{\bf 2020 Mathematics Subject Classification:} Primary 35K05, 60J50, 60J60.
\section{Introduction}

Let $U:=(U_t)_{t \geq 0} $ be a one-dimensional Ornstein-Uhlenbeck (OU for short) process defined  on a filtered probability space $(\Omega, (\mathcal{F})_{t \geq 0}, \mathcal{F}, \mathbb{P})$ as the unique solution to the following stochastic differential equation (SDE)
\begin{equation} \label{OU general SDE}
    dU_t = -k U_t dt +  dB_t, \; \; \; U_0={0},
\end{equation}
where $(B_t)_{t \geq 0}$ is a standard Brownian motion (BM) starting at 0 and $k \in \mathbb{R}$ is a constant. The OU process is a Gauss-Markov process with transition density function given by

\begin{equation} \label{OU density}
    p_{t}(x,y) :=  \frac{\partial }{\partial y} \mathbb{P}(U_t \leq y | U_0 = x) =     \frac{e^{kt}}{\sqrt{r(t)}} \phi\left( \frac{y e^{kt}-x 
    }{ \sqrt{r(t)}} \right), \quad x,y \in \mathbb{R},
\end{equation}
where $\phi(z)= e^{-\frac{z^2}{2}}/\sqrt{2 \pi}$, $z \in \mathbb{R}$, is the probability density function of the standard normal distribution and 
\[
\begin{aligned}
    r(t) & = (e^{2kt}-1)/2k, \quad t \geq 0,\\
   s(t) &=  \ln{(2kt+1)}/2k, \quad t \leq \zeta^{(k)},
\end{aligned}
\]
where
$$
 \zeta^{(k)} = \begin{cases}
- \frac{1}{2k} & \text{if} \quad k < 0;\\     
+\infty &  \text{otherwise}.
\end{cases}$$
It is well known by the Dambis, Dubins-Schwarz theorem (see, e.g., Theorem V.1.6 in \cite{Revuz yor brownian motion}), that the OU process can be written in terms of a time changed BM $(W_t)_{t \geq 0}$ as

\begin{equation}\label{OU brownian motion relation}
     U_t =  e^{-kt}  W_{r(t)}, \quad t \geq 0.
\end{equation}
Let $f \in \mathcal{C}([0,\infty), \mathbb{R})$ be such that $f(0) \neq 0$, with $\mathcal{C}(I, K)$ denoting the space of continuous functions from $I$ into $K$ for some intervals $I$ and $K \subseteq \mathbb{R}$. We are interested in the first passage time (FPT) of the OU to $f$ given by
\[
 T^f_k= \inf \{ t >0;\   U_{t} = f(t)\},
\]
with $\inf\{ \emptyset \} = \infty$. The main goal of this paper is to derive, through different methods, an explicit analytical expression linking the distribution of $T_{k}^f$ to that of  $T_{k}^{S^{\alpha,\beta}_k f}$. Here, the two-parameter family of curves $\{ S^{\alpha,\beta}_kf ; \alpha \neq 0, \beta \in \mathbb{R} \}$ is defined by

\begin{equation} \label{S hat transformation}
    {S}^{\alpha,\beta}_k f(t) = \left(\frac{1+ \alpha \beta r(t)}{\alpha} \right) 
    \left(2k \frac{\alpha^2 r(t)}{1+ \alpha \beta r(t)} + 1 \right)^{1/2} \; e^{-kt} \; f \left(s\left( \frac{ \alpha^2 r(t)}{1 + \alpha \beta r(t)} \right)\right), \quad t <  \zeta_{k,\alpha, \beta},
    \end{equation}
where 
$$
 \zeta_{k,\alpha, \beta} = \begin{cases}
s\left(- \frac{1}{\alpha \beta}\right) & \text{if}\; \alpha \beta <0, \; k \geq 0;\\     
s\left(- \frac{1}{\alpha \beta + 2 k \alpha^2 }\right) &\text{if}\; 0<\frac{2k}{\alpha \beta + 2k \alpha^2}<1,\; k<0;\\   
  + \infty & \text{otherwise}.
\end{cases}$$
By doing so, we generalize the results obtained for a BM in \cite{Alili patie ode}, {which can be immediately recovered from ours by letting $k\to 0$, i.e. $T^f_0, S^{\alpha,\beta}_0$ and $\zeta_{0,\alpha,\beta}$. To simplify the notation and for consistency with \cite{Alili patie ode}, we drop the subscript $0$ when referring to the BM.} 

To the best of our knowledge, explicit results for the FPT of the OU to $f$ only exist for constants {\cite{Alili patie OU,Ricciardi Sacerdote Sato}} or hyperbolic type boundaries {\cite{Buonocore OU hyperbolic boundary,Daniels}}. Further results for the boundary crossing problem of Gauss-Markov processes to moving boundaries have been obtained in, e.g., {\cite{Buonocore, Donofrio Pirozzi 2019,Durbin,Pirozzi}.}
{We also refer to \cite{Bluman and cole, Sacerdote} for applications of Lie symmetries to FPT problems. Our main result, stated in Theorem \ref{theorem OU transformation}, allows to map those results for the law of the FPT of $T_{k}^f$ to that of  $T_{k}^{S^{\alpha,\beta}_k f} $ for the OU process}. 
Such problems are of great interest, as the OU process has been used in many applications to model objects such as interest rates in finance or the evolution of the neuronal membrane voltages in neuroscience, see e.g. \cite{Alili patie OU} and citations therein. In this paper, we focus on the OU without drift, as the results for the OU process with drift can be directly obtained from our results after some transformations, {as discussed in Remark \ref{OU with drift curves}}. 

The paper is organised as follows. In Subsection \ref{Section21}, we introduce some notations and provide the key results for the  $S^{\alpha,\beta}$ operator for the BM. The OU functional setting is presented in Subsection \ref{Section22}. In Subsection {\ref{section OU bridge}}, we recall the different constructions of OU bridges and their properties. {In particular, the process $({S}_k^{1,-1/r(T)}U_t, 0< t \leq T)$ has the same law as an OU bridge of length $T$ from 0 to 0, for some $T>0$.} 
 {Section \ref{section main results} is devoted to the statement of Theorem \ref{theorem OU transformation}, which contains the main result of the paper, and two examples of its application. In Section \ref{section properties of transformation}, we discuss the properties of the ${S}^{\alpha,\beta}_kf$ transformation with its connection to a certain nonlinear differential equation (Lemma \ref{lemma nonlinear ODE})}, while in Section \ref{section proofs of theorem 3.1}, we prove Theorem \ref{theorem OU transformation} in three different ways. In the first proof, we use the relationship between the FPT of an OU and that of a BM. In the second one, we use a generalisation of the Gauss-Markov processes introduced in Section 3.2 of \cite{Alili patie ode} and find an analogue version of that proof in our case. In the third one, we use the Lie algebra to find the symmetries of the Fokker-Planck equation {or the Kolmogorov forward differential equation}
{
\begin{equation} \label{OU fokker planck equation}
    \frac{\partial h}{\partial t} = \frac{1}{2} \frac{\partial^2 h}{\partial x^2} + kx \frac{\partial h}{\partial x} + k  h.
\end{equation}}
Then, we use these symmetries to construct the function $h^{\alpha,\beta}_k$ of equation \eqref{h hat alpha beta equation}, derive our transformation $S^{\alpha,\beta}_kf$ and relate the FPT distribution of $T_{k}^f$ to that of $ T_{k}^{S^{\alpha,\beta}f}$, in Section \ref{section lie proof}. In Section \ref{section asymptotic behaviour of FPT densities}, we discuss the asymptotic distribution of $T_{k}^{S^{\alpha, \beta}_kf}$ and the transience of  the transformed curves $S^{\alpha,\beta}_kf$. We provide the analogue of the Kolmogorov–Erdös–Petrovski transience test \cite{Erdos} in the OU case and show its connection to the asymptotic behaviour of the FPT. Lastly, in Section {\ref{section method of images}}, we use the method of images to obtain new classes of boundaries yielding explicit FPT distributions and use our ${S}^{\alpha,\beta}_kf$ transformation (\ref{S hat transformation}) to produce new examples.  A limitation of this method is that it only works for boundaries with certain properties given in Lemma \ref{lemma characterisation}.  

As we were finalising the paper, we discovered that 
the $S^{\alpha,\beta}_k$ transformation, a variant of the boundary crossing identity (\ref{FPT dist relation}) in Theorem \ref{theorem OU transformation} and the Lie symmetries (\ref{symmetry OU})  had previously appeared in \cite{Dmitry Lie} (our (\ref{S hat transformation}) can be obtained by setting $A=k^2$ and $B=0$ in equation (39) therein), using the Lie approach. When comparing our results, we found misprints in one of their Lie symmetries and boundary crossing identity, as discussed in Section \ref{section lie proof}.


\section{Notation and preliminaries}\label{Section2}
 {We first introduce some functional spaces, transformations and related results for the BM, as in \cite{Alili patie ode}, in Subsection \ref{Section21}, and then define the corresponding functional objects for the OU  in  Subsection \ref{Section22}. We end this section by providing different 
representations of OU bridges and highlighting their connection to our functional transformations.}
 
\subsection{Brownian motion setting}\label{Section21}
 We start by introducing a nonlinear operator $\tau$ defined on the space of functions whose reciprocals are square integrable in some (possibly infinite) interval
of $\mathbb{R}^+=[0,\infty)$ by 
$$\tau f(t) = \int_{0}^{t} f^{-2}(z) dz,$$
and use it to define 
$$A_{\infty}=\bigcup\limits_{a >0} \bigcup\limits_{b > 0} A(a,b) \; \; \text{and} \; \; A(a,b) = \left\{\pm f \in  \mathcal{C}([0,a], \mathbb{R}^+) : \;  \tau f\left(a\right) = b \right\},$$
where $a,b\in\mathbb{R}^+$. {In \cite{Alili patie ode}, the authors derived the following relationship between the laws of the FPTs of $T^f$ and $T^{S^{\alpha,\beta} f}$, 
\begin{equation}\label{BM FPT relation}
    \mathbb{P}(T^{S^{\alpha, \beta}f} \in dt) = \alpha^3 (1+ \alpha \beta t)^{- \frac{5}{2}} e^{- \frac{\alpha \beta}{2(1+ \alpha \beta t)} (S^{\alpha, \beta}f(t))^2} S^{\alpha, \beta}  (\mathbb{P}( T^f \in dt)), \quad t<\zeta_{0, \alpha, \beta},
\end{equation}
 where $S^{\alpha,\beta}$ is the two-parameter family of transformations $S^{\alpha,\beta}:A_{\infty}\to A_{\infty}$ given by
 \begin{equation} \label{tranformation brownian motion}
    S^{\alpha,\beta} f(t) = \left(\frac{1+ \alpha \beta t}{\alpha} \right) f \left(\frac{ \alpha^2 t}{1 +  \alpha \beta t} \right), \quad  \alpha\neq 0, \beta \in \mathbb{R}.
\end{equation}
Equation \eqref{BM FPT relation} extends a previous result by the same authors for a relationship between the laws of the FPT of $T^f$ and $T^{S^{1,\beta} f}$, with the one-parameter family of transformations $\{S^{1,\beta} f, \beta \in \mathbb{R}\}$ obtained using the construction of Brownian bridges, see \cite{Alili patie time inversion}. The connection to Brownian bridges naturally appears as $(S^{1,-1/T}(B)_t , 0 \leq t < T )$ is a Brownian bridge of length $T$ from 0 to 0, see page 64 of \cite{Borodin Salminen}.}
{Besides deriving \eqref{BM FPT relation} in \cite{Alili patie ode}, the authors showed also that the transformation $S^{\alpha,\beta}$ can be obtained as 
\begin{equation}\label{S0BM}
    S^{\alpha,\beta}= \Sigma \circ \Pi^{\alpha, -\beta} \circ \Sigma
\end{equation}
where $\circ$ denotes the composition operator. Here, $\Sigma: A_{\infty} \to A_{\infty}$ is the involution operator,  i.e.,  $\Sigma \circ \Sigma  = Id$, specified by 
\[
\Sigma f(t) = \frac{1}{f(\rho \circ \tau f(t))},
\]
where $\rho$ is the inversion operator acting on the space of continuous monotone functions i.e., $ \rho f \circ f(t) = t$. 
For  $\alpha \in \mathbb{R}^*=\mathbb{R}\backslash\{0\}, \beta \in \mathbb{R}$, $\Pi^{\alpha,\beta}: A_{\infty}\to A_{\infty}$ is the family of nonlinear operators  given by
\begin{equation} \label{Pi}
     \Pi^{\alpha, \beta}f(t)= f(t)(\alpha + \beta \tau f(t)).
\end{equation}
As explained in the beginning of Section 2 of \cite{Alili patie ode} and Appendix 8 of \cite{Revuz yor brownian motion}, the operators \eqref{Pi} are closely related to the Sturm-Liouville equation
\begin{equation} \label{heat SL equation}
        \phi'' = \mu \phi,
\end{equation}
where $\mu$ denotes a positive Radon measure on $\mathbb{R}^+$ and $\phi''$ is the second derivative in the sense of distributions. In fact, if $\phi$ solves (\ref{heat SL equation}), then the vectorial space $\{\Pi^{\alpha,\beta} \phi ; \alpha\neq 0, \beta \in \mathbb{R}\}$ is the set of other solutions to the same equation. Moreover, all positive solutions are convex and described by the set $\{\Pi^{\alpha,\beta} \varphi ; \alpha>0, \beta \geq 0\}$, where $\varphi$ is the unique, positive, decreasing solution such that $\varphi(0)=1$. 
\\
Moreover, it was noted in \cite{Alili patie ode} that $S^{\alpha, \beta} \circ S^{\alpha', \beta'} = S^{\alpha \alpha', \alpha \beta' + \frac{\beta}{\alpha'}}$, for all couples $(\alpha, \beta)$ and $(\alpha', \beta') \in \mathbb{R}^{*} \times \mathbb{R}$, and that $(S^{1, \beta})_{\beta  \geq 0}$ is a semi-group, while $(S^{1, \beta})_{\beta  \in \mathbb{R}}$ and $(S^{{e^\alpha}, 0})_{\alpha  \in \mathbb{R}}$ are groups.}

\subsection{Ornstein-Uhlenbeck setting}\label{Section22}\label{section notations}
We shall now define the operators of interest on the space of functions
\begin{equation} \label{funtional space infinite}
     A_{k,\infty}=\bigcup\limits_{a >0} \bigcup\limits_{b > 0} A_k(a,b), 
\end{equation}
where $a,b \in \mathbb{R}^+$, and
\begin{equation} \label{funcional space finite}
    {A}_k(a,b) = \left\{\pm f \in  \mathcal{C}([0,a], \mathbb{R}^+) : \;  \tau f\left(\sgn(k)s\big(\sgn(k)a\big)\right) = \frac{b}{1 - \mathbbm{1}_{\{k<0\}} 2kb} \right\},
\end{equation}
with $\sgn(k)$ being the sign of $k$. Note that $A_{k,\infty}$ is the set of continuous functions which are of constant sign on some nonempty interval $[0, l], \; l>0$.
{Let the isomorphic nonlinear operators $\Lambda_k$ and $\Sigma_k$ be defined, on $A_{k,\infty}$, by 
\begin{equation} \label{lamda_k operator}
    \Lambda_k f (t)= e^{ks(t)} f(s(t))
\end{equation}
and
\begin{equation} \label{sigma hat composition}
    {\Sigma_k} = {\Lambda_k}^{-1} \circ \Sigma \circ {\Lambda_k},
\end{equation}
respectively.
Note that the inverse of $\Lambda_k$ is $\Lambda_k^{-1}f(t)= e^{-kt} f(r(t))$}. {Here, $\Lambda_k$ maps the curves from the OU setting to the corresponding curves in the BM setting, with limits $\lim_{k \to 0}\Lambda_k = Id$, and $\lim_{k\to 0} \Sigma_k = \Sigma$.} From \eqref{sigma hat composition}, we can immediately see that
$\Sigma_k$ can also be represented as
\begin{equation} \label{sigma hat functional}
    {\Sigma}_kf(t) = \frac{e^{-k \rho \circ \tau f(r(t)) - kt}}{f(\rho \circ \tau f(r(t)))}.
\end{equation}
Inspired by \eqref{S0BM} in the BM setting, for $\alpha \in \mathbb{R}^*$ and $\beta \in \mathbb{R}$, define ${S}_k^{\alpha, \beta}:  A_{k,\infty}\to A_{k,\infty}$ by

\begin{equation} \label{S hat decomposition}
    S^{\alpha, \beta}_k  = \Sigma_k \circ \Pi^{\alpha, -\beta} \circ \Sigma_k.
\end{equation}
{Alternatively, $S^{\alpha, \beta}_k$ can be also obtained as $\Lambda_k^{-1}\circ S^{\alpha,\beta}\circ \Lambda_k$, as shown in the proof of Lemma \ref{lemma nonlinear ODE}. This result provides us with an intuitive interpretation of this family of transformations. First, we map}
{the boundary $f$ for the OU process to its corresponding boundary for the standard BM using the $\Lambda_{k}$ transformation. Then, we use $S^{\alpha,\beta}$ on this curve for the standard BM, and finally we revert it back to the OU problem via $\Lambda^{-1}_k$, as illustrated in the following figure.} 
\newline
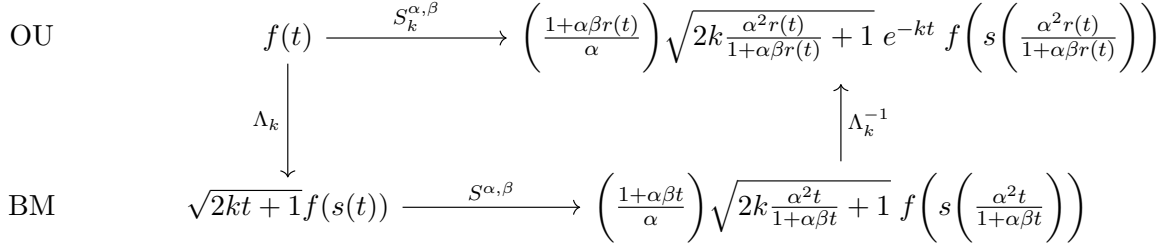
\begin{figure}[h]
    \centering
    \[
\begin{tikzcd}[ampersand replacement=\&, row sep=large, column sep = large]
\textrm{OU} \&   f(t)\arrow{r}{{S}^{\alpha, \beta}_k} \arrow[swap]{d}{{\Lambda_k}} \&   \bigg(\frac{1+ \alpha \beta r(t)}{\alpha} \bigg) 
    \sqrt{2k \frac{\alpha^2 r(t)}{1+ \alpha \beta r(t)} + 1} \; e^{-kt} \; f \bigg(s\bigg( \frac{ \alpha^2 r(t)}{1 + \alpha \beta r(t)} \bigg)\bigg)\\%
\textrm{BM} \&      \sqrt{2kt+1} f(s(t)) \arrow{r}{S^{\alpha, \beta}} \&  \bigg(\frac{1+ \alpha \beta t}{\alpha} \bigg) 
    \sqrt{2k \frac{\alpha^2 t}{1+ \alpha \beta t} + 1} \; f \bigg(s\bigg( \frac{ \alpha^2 t}{1 + \alpha \beta t} \bigg)\bigg)  \arrow[swap]{u}{{\Lambda}_{k}^{-1}}
\end{tikzcd}
\]  
    \caption{Flow chart of $S^{\alpha,\beta}_k$.}
    \label{flow chart of transformation}
\end{figure}  

\subsection{Ornstein-Uhlenbeck bridges} \label{section OU bridge}

As mentioned in the introduction, $S^{\alpha,\beta}_k$ for specific values of $\alpha$ and $\beta$ relates to a representation of the standard OU bridge. We shall now define this process and then recall its different representations, using the detailed analysis of both Wiener and OU bridges provided in \cite{Barczy OU bridge representations,Borodin Salminen}.
\begin{definition}
An OU bridge $U^{\text{br}} = \{U^{\text{br}}_{t}: t \in [0,T] \}$ from a to b, of length T, is characterised by the following properties:
\begin{enumerate}[label=(\roman*)]
    \item $U^{\text{br}}_{0}=a$ and $U^{\text{br}}_{T}=b$ (each with probability 1).
    \item $U^{\text{br}}$ is a Gaussian process.
    \item $\mathbb{E}[U^{\text{br}}_{t}] = a \frac{\sinh{(k(T-t))}}{\sinh{(kt)}} + b \frac{\sinh{(kt)}}{\sinh{(kT)}}$.
    \item $\text{Cov}(U^{\text{br}}_{s}, U^{\text{br}}_{t}) = \frac{\sinh{(ks)} \sinh{(k(T-t))}}{k \sinh{(kT)}},\quad 0 \leq s \leq t < T$.
    \item {The paths of $U^{\text{br}}$} are almost surely continuous.
\end{enumerate}
\end{definition}
In what follows, we present three different representations of OU bridges. First, consider the following linear SDE
\begin{equation} \label{OU bridge sde}
    dU^{\text{br}}_{t} =  \left(- k \coth{(k(T-t))} U^{\text{br}}_{t} + k \frac{b}{\sinh{(k(T-t))}} \right) dt + dB_t, \; \; \; 0 \leq t < T,
\end{equation}
with initial condition $U^{\text{br}}_{0} = a$. This has a unique strong solution given by 
\[
    U_{t}^{ir} := \begin{cases}
 a \frac{\sinh{(k(T-t))}}{\sinh{(kT)}} + b \frac{\sinh{(kt)}}{\sinh{(kT)}} + \int_{0}^{t} \frac{\sinh{(k(T-t))}}{\sinh{(k(T-s))}} dB_s & \text{if}\quad 0 \leq t < T,\\      
b & \text{if} \quad t = T.
\end{cases}
\]
This is referred to as the integral representation (ir) of the OU bridge. The following anticipative (av) and the space-time (st) versions can be obtained by using the different representations of the Wiener bridge,
\[
    U_{t}^{av} = a \frac{\sinh{(k(T-t))}}{\sinh{(kT)}} + b \frac{\sinh{(kt)}}{\sinh{(kT)}} +\left(U_t -   \frac{\sinh{(kt)}}{\sinh{(kT)}} U_T \right), \quad 0 \leq t < T,
\]
\[
    U_{t}^{st} = a \frac{\sinh{(k(T-t))}}{\sinh{(kT)}} + b \frac{\sinh{(kt)}}{\sinh{(kT)}} + e^{-kt} \frac{(r(T) - r(t))}{r(T)} W_{\frac{r(T) r(t)}{ r(T) - r(t)}}, \quad 0 \leq t < T.
\]
{Now by setting $a=b=0$, we see that $U_t^{st} = {S}^{1,-1/r(T)}_k U_t$}. One thing to notice is that the anticipative  and space-time versions are only weak solutions to (\ref{OU bridge sde}), {since the former requires information about the random variable $U_T$ and the latter is adapted to a different filtration, see Section 1 of \cite{Barczy OU bridge representations}.} Of course, $U^{br}$ can be thought of as $(U_t, t \leq T)$ conditioned on $U_T = b$. For conditioned processes and Markovian bridges, see  \cite{Pitman yor markovian bridges} and \cite{Revuz yor brownian motion}.

\section{Main result and examples} \label{section main results}

We are now ready to state the main result of this paper which relates
the distributions of the family of stopping times $(T_{k}^{S^{\alpha,\beta}_k f})_{\alpha \in \mathbb{R}^*,\beta \in \mathbb{R}}$ to that of $T_{k}^{f}$. {We assume that $f \in \mathcal{C}^1((0, \infty), \mathbb{R}^+)$, i.e. it belongs to the space of continuously differentiable functions from $(0,\infty)$ into $\mathbb{R}^+$. This assumption is linked to Strassen's result for the BM in \cite{strassen} and is crucial in ensuring that the distribution of  $T_{k}^f$ has a continuous density with respect to the Lebesgue measure on $(0, \infty)$.}


\begin{theorem} \label{theorem OU transformation}
        Let $f \in C^1([0,\infty), \mathbb{R})$ be such that $f(0)\neq0$. Let $\alpha\in\mathbb{R}^*, \beta \in \mathbb{R}$. Then, for $t < \zeta_{k,\alpha, \beta}$, we have the following relationship
        
        \begin{equation} \label{FPT dist relation}
    \begin{aligned}
       &  \mathbb{P}\Big(  T^{{S}^{\alpha,\beta}_k f}_k  \in dt  \Big) = \\
       &  e^{3kt} \alpha^3 (1+ \alpha \beta r(t))^{-5/2} \bigg( 2k \frac{\alpha^2 r(t)}{1+ \alpha \beta r(t)} + 1 \bigg)^{-3/2}
        e^{- \frac{\alpha \beta }{2(1 + \alpha \beta r(t))} (S^{\alpha,\beta}_k f)^2 e^{2kt}} S^{\alpha,\beta}_k\Big(  \mathbb{P}( T^{ f}_k \in dt  )\Big), \\
    \end{aligned}
    \end{equation}
    { where $S^{\alpha,\beta}_k$ is given by \eqref{S hat transformation}}.
\end{theorem}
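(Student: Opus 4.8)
The plan is to reduce the statement to the known Brownian boundary-crossing identity \eqref{BM FPT relation} by exploiting the time-change representation \eqref{OU brownian motion relation}. First I would record the elementary but decisive observation that the OU first passage to $f$ is, up to a deterministic time change, a Brownian first passage to $\Lambda_k f$. Indeed, writing $U_t = e^{-kt} W_{r(t)}$, the event $U_t = f(t)$ is equivalent to $W_{r(t)} = e^{kt} f(t)$; substituting $u = r(t)$, $t = s(u)$, and recalling $\Lambda_k f(u) = e^{ks(u)} f(s(u))$ from \eqref{lamda_k operator}, this reads $W_u = \Lambda_k f(u)$. Since $r$ is a strictly increasing bijection on the relevant interval, taking infima yields the pathwise identity $r(T_k^f) = T^{\Lambda_k f}$, equivalently $T_k^f = s\big(T^{\Lambda_k f}\big)$.

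Next I would transfer this to the level of densities. By the smoothness hypothesis $f \in C^1$ and the discussion preceding the theorem, both $T_k^f$ and $T^{\Lambda_k f}$ admit continuous densities, so from $r(T_k^f) = T^{\Lambda_k f}$ and $r'(t) = e^{2kt}$ the change-of-variables formula gives $\mathbb{P}(T_k^f \in dt) = e^{2kt}\,\mathbb{P}\big(T^{\Lambda_k f} \in du\big)\big|_{u=r(t)}$. I would then apply \eqref{BM FPT relation} to the Brownian boundary $g := \Lambda_k f$ and use the conjugation identity $S^{\alpha,\beta}\circ\Lambda_k = \Lambda_k\circ S_k^{\alpha,\beta}$ (equivalently $S_k^{\alpha,\beta} = \Lambda_k^{-1}\circ S^{\alpha,\beta}\circ\Lambda_k$) to rewrite $S^{\alpha,\beta} g = \Lambda_k S_k^{\alpha,\beta} f$. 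The same change of variables applied to $S_k^{\alpha,\beta} f$ in place of $f$ then gives $\mathbb{P}(T_k^{S_k^{\alpha,\beta} f} \in dt) = e^{2kt}\,\mathbb{P}(T^{S^{\alpha,\beta} g} \in du)\big|_{u=r(t)}$, into which \eqref{BM FPT relation} may be substituted directly.

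The remaining work is bookkeeping, and this is where I expect the only real friction. Two substitutions must be carried out carefully at $u = r(t)$. First, since $S^{\alpha,\beta} g = \Lambda_k S_k^{\alpha,\beta} f$, evaluating at $u = r(t)$ and using $s(r(t)) = t$ gives $S^{\alpha,\beta} g(r(t)) = e^{kt} S_k^{\alpha,\beta} f(t)$; squaring produces the factor $e^{2kt}$ inside the exponential of \eqref{FPT dist relation}. Second, and more delicately, I must relate the transformed Brownian density $S^{\alpha,\beta}\big(\mathbb{P}(T^g \in du)\big)$ to $S_k^{\alpha,\beta}\big(\mathbb{P}(T_k^f \in dt)\big)$. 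Writing $q$ for the density of $T^g$ and $p$ for that of $T_k^f$, the time change gives $q(u) = p(s(u))\, s'(u) = p(s(u))/(2ku+1)$; inserting this into the definition \eqref{tranformation brownian motion} of $S^{\alpha,\beta}$ and evaluating at $u=r(t)$, the inner argument becomes $\alpha^2 r(t)/(1+\alpha\beta r(t))$, so the Jacobian factor turns into exactly $\big(2k\,\alpha^2 r(t)/(1+\alpha\beta r(t))+1\big)^{-1}$. Comparing with \eqref{S hat transformation} then yields $S^{\alpha,\beta} q(r(t)) = e^{kt}\big(2k\,\alpha^2 r(t)/(1+\alpha\beta r(t))+1\big)^{-3/2} S_k^{\alpha,\beta} p(t)$.

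Finally I would collect the prefactors. Substituting the two displayed identities into \eqref{BM FPT relation} at $u=r(t)$ and multiplying by the Jacobian $e^{2kt}$, the exponential prefactors combine as $e^{2kt}\cdot e^{kt} = e^{3kt}$, the powers of $(1+\alpha\beta r(t))$ and of $(2k\,\alpha^2 r(t)/(1+\alpha\beta r(t))+1)$ assemble into the stated $-5/2$ and $-3/2$ exponents, and the exponential term acquires the extra $e^{2kt}$ factor computed above, reproducing \eqref{FPT dist relation} verbatim. The main obstacle is thus not conceptual but the consistent tracking of the argument rescaling under the $s$--$r$ time change together with the Jacobian of the density transformation, which must be matched against the explicit form of $S_k^{\alpha,\beta}$; the \emph{content} of the identity is already contained in the Brownian result, and the OU case merely dresses it with the deterministic factors generated by $\Lambda_k$ and the time change.
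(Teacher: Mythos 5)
Your proposal is correct and coincides with the paper's first proof of Theorem \ref{theorem OU transformation}: both reduce the claim to the Brownian identity \eqref{BM FPT relation} via the time change $T_k^f = s(T^{\Lambda_k f})$, the conjugation $S_k^{\alpha,\beta} = \Lambda_k^{-1}\circ S^{\alpha,\beta}\circ\Lambda_k$, and the same Jacobian bookkeeping. Your intermediate identities, in particular $S^{\alpha,\beta}\Lambda_k f(r(t)) = e^{kt}S_k^{\alpha,\beta}f(t)$ and the $\bigl(2k\alpha^2 r(t)/(1+\alpha\beta r(t))+1\bigr)^{-3/2}$ factor, match the paper's computation exactly.
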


\begin{remark} \label{OU with drift curves}
{As mentioned in the introduction, focusing on the driftless OU is not restrictive. Consider the process $U^{\mu} := (U_t^{\mu})_{t \geq 0}$, defined for each fixed $t>0$ by 
\[
U_t^{\mu} : = U_t + \mu(1 - e^{-kt}),
\]
is called an OU process with drift $\mu$, see Remark 2.5 of \cite{Alili patie OU}. For $f:[0,\infty) \to  \mathbb{R}$ define $f^*(t) := f(t) - \mu(1 - e^{-kt})$, $t \geq 0$. Then, the FPT of $U^{\mu}$ hitting the curve $f$ equals to the FPT of $U$ hitting the curve $f^*$.
}
\end{remark}

\begin{example}
Theorem 3.1 in \cite{Alili patie OU} gives an expression for the FPT density of an OU process starting at $0$ hitting a constant threshold $f(t)=a>0$. Writing $p_k^a(t)dt = \mathbb{P}\Big(  T_{k}^{a} \in dt  \Big)$, we have
 \[
 		p_k^a(t) = -k e^{-ka^2/2} \sum_{j=1}^{\infty} \frac{D_{\nu_{j,-a \sqrt{t}}}(0)}{D'_{\nu_{j, -a \sqrt{t}}} (-a \sqrt{2k})} \exp(-k \nu_{j, -a \sqrt{2k} }t),
 \]
 where $D_\nu (.)$ is the parabolic cylinder function with index $\nu \in \mathbb{R}$,  
$\nu_{j,b}$ the ordered sequence of positive zeros of $D_{\nu}(b)$, $D_{\nu_{j,-a \sqrt{t}}}(0) = 2^{\frac{\nu_{j,-a \sqrt{t}}}{2}} \frac{\Gamma (1/2)}{\Gamma\left((1 - \nu_{j,-a \sqrt{t}})/2\right)}$ and $D'_{\nu_{j,b}}(b) = \frac{\partial D_{\nu} (b)}{\partial \nu}|_{\nu = \nu_{j,b}}$. Here, $D_\nu (z) = 2^{-\nu/2} e^{-z^2/4} H_{\nu}(z/\sqrt{2})$, where $H_{\nu}(.)$ is the Hermit function, as found on page 285 of \cite{Lebedev}.
Now, we use $S^{\alpha,\beta}_k$ to get the following family of curves
\[
\begin{aligned}
    S^{\alpha,\beta}_k a(t) & = a \left(\frac{1+ \alpha \beta r(t)}{\alpha} \right) 
    \left(2k \frac{\alpha^2 r(t)}{1+ \alpha \beta r(t)} + 1 \right)^{1/2} \; e^{-kt} \\
    & = \frac{a e^{-kt}}{2k \alpha} \left( (\alpha \beta e^{2kt} - \alpha \beta + 2k)\left[(2k\alpha^2+\alpha \beta) e^{2kt} - 2k \alpha^2 -\alpha \beta +2k\right] \right)^{1/2}.
\end{aligned}
\]
Such curves for $\beta=1, \; k=1/2, \; a=1$ and different values of $\alpha$ are plotted in Figure \ref{figure transformed barrier 1}. By our Theorem \ref{theorem OU transformation}, we can write
\begin{eqnarray}
\nonumber p_k^{S^{\alpha,\beta}_k a}(t) & =&   \frac{-k e^{2kt} \alpha^2(1+ \alpha \beta r(t))^{-5/2}}{\big(2\alpha^2r(t)+ \alpha\beta r(t) + 1\big)} \exp\left(- \frac{\alpha \beta }{2(1 + \alpha \beta r(t))} (S^{\alpha,\beta}_k a(t))^2 \; e^{2kt} - ka^2/2\right)\\
\label{neweq} && \times  \sum_{j=1}^{\infty} \frac{D_{\nu_{j,-a \sqrt{t}}}(0)}{D'_{\nu_{j, -a \sqrt{t}}} (-a \sqrt{2k})} \exp\left(-k \nu_{j, -a \sqrt{2k} } \; s\left( \frac{ \alpha^2 r(t)}{1 + \alpha \beta r(t)} \right) \right).
\end{eqnarray}
In the special case of $\alpha \beta = 2k,$ we get the family of curves 

\[S^{\alpha,\beta}_k f(t)   =  \frac{a}{\alpha} \sqrt{ (\alpha^2 +1) e^{2kt} - \alpha^2},\] 
with FPT density given by
\[
	p_k^{S^{\alpha,\beta}_k f}(t) = \frac{-k \alpha^2 e^{-k a^2 \left(\frac{\alpha^2 +1}{\alpha^2} \right) e^{2kt}+ \frac{k a^2}{2}+kt}}{(\alpha^2 +1)e^{2kt}-\alpha^2}  \sum_{j=1}^{\infty} \frac{D_{\nu_{j,-a \sqrt{t}}}(0)}{D'_{\nu_{j, -a \sqrt{t}}} (-a \sqrt{2k})} e^{-k \nu_{j, -a \sqrt{2k} } \; s\left( \frac{ \alpha^2 r(t)}{1 + \alpha \beta r(t)} \right)}.
\]
{An alternative expression for \eqref{neweq} can be obtained by using Theorem \ref{theorem OU transformation} with the semi-explicit expression for $p_k^a$ provided in \cite{Ricciardi Sacerdote Sato}.}

 \begin{figure}	 \includegraphics[width=.6\textwidth]{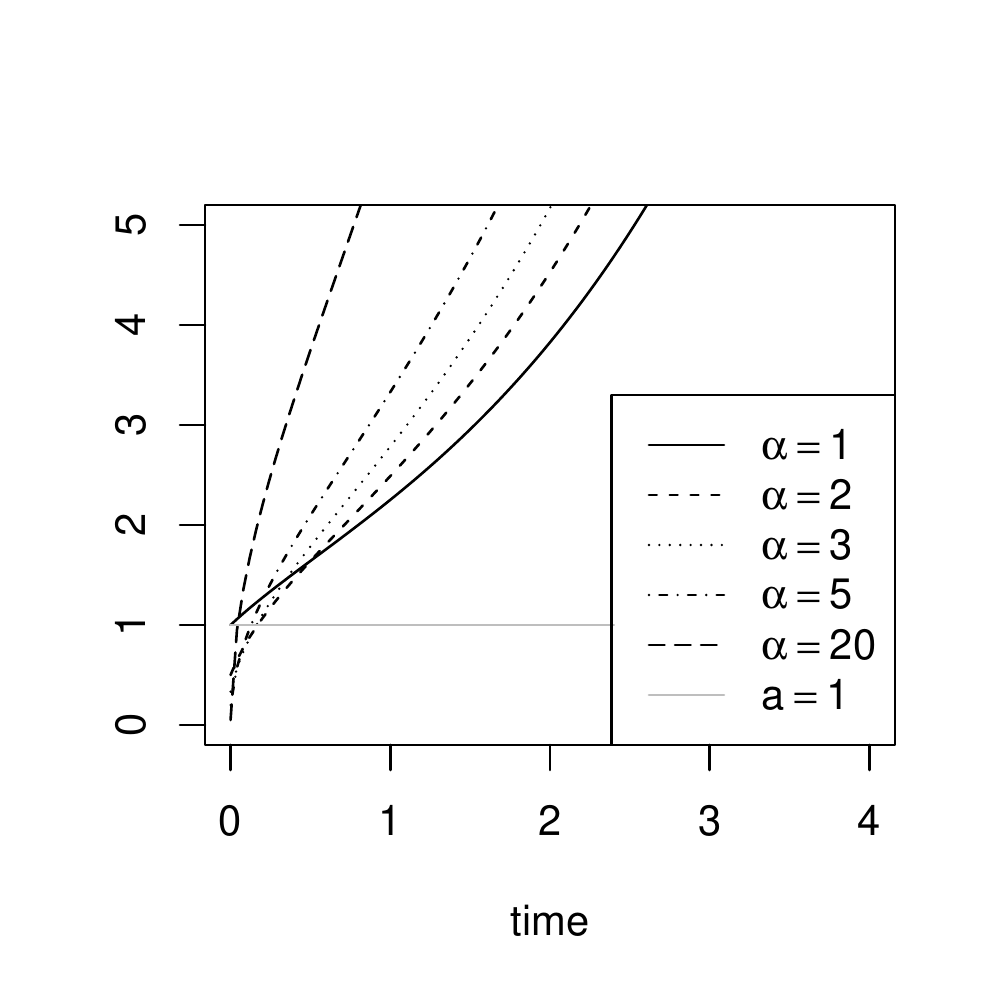}
	\centering
	\caption{${S}^{\alpha,1}_{1/2} a$ for $a=1, \; \beta=1, \; k=1/2,$ and different values of $\alpha$, in particular $\alpha$=1\;(continuous line),\;  2\;(short-dashed line),\; 3\;(dotted line),\; 5\;(dashed-dotted line),\; 20\;(long-dashed line).}
	\label{figure transformed barrier 1}
\end{figure}
\end{example}

\begin{example}
    {By Theorem 2.1 in \cite{Groeneboom}, the law of the first time when a BM hits a parabolic curve of the form $f_{a,b}(t)= a + bt^2$, for $a,b>0$, is given by 
   \[
    \mathbb{P}(T^{f_{a,b}} \in dt) = e^{-2/3 \; b^2 t^3 } h_{b,a}(t)dt, \quad t>0,
    \]
    where $h_{b,a}: \mathbb{R}^{+} \to \mathbb{R}^{+}$ is characterised by the  Laplace transform
    \[
    \int_{0}^{\infty} e^{-\lambda u} h_{b,a}(u)du = \frac{Ai\left((4b)^{1/3}a + (2b^2)^{-1/3} \lambda\right)}{Ai\left((2b^2)^{-1/3} \lambda\right)},
    \quad \lambda>0, 
    \]
    and $Ai$ denotes the Airy function of the first kind, as introduced on page 136 of \cite{Lebedev}. By the scaling property of the BM, one has
\[
T^{f_{a,b}} = a^2 T^{f_{1,ba^3}},
\]    
and so we can use the series expansion given in example 4.1.2 of \cite{Alili patie time inversion} for the density of $T^{f_{1,ba^3}}$ to get
\begin{equation} \label{FPT of BM to parabola}
     \mathbb{P}(T^{f_{a,b}} \in dt) = 2(c ba^3)^2 e^{- \frac{2}{3} b^2 t^3} \sum_{j=0}^{\infty} \frac{Ai(z_j + 2c ba^3)}{Ai'(z_j)} e^{-z_j t/a^2} \frac{dt}{a^2},
\end{equation}
where $(z_j)_{j \geq 0}$ is the decreasing sequence of negative zeros of the function $Ai(.)$ and $c=(2 ba^3)^{-1/3}$. Now, the FPT problem for this curve is equivalent to the FPT problem of the OU hitting the curve $g(t) := \Lambda_{k}^{-1}f_{a,b}(t) = e^{-kt} (a + b r^2(t))$, with its FPT law given by 
    \[
    \mathbb{P}(T_{k}^g \in dt) = 2(c ba^3)^2 e^{-2/3 b^2 r(t)^3} \sum_{j=0}^{\infty} \frac{Ai(z_j + 2c ba^3)}{Ai'(z_j)} e^{-z_j r(t)/a^2}  e^{2kt} \frac{dt}{a^2},
    \]
    which is obtained using (\ref{FPT of BM to parabola}) and (\ref{OU brownian stopping time relation}).
   Applying the transformation $S^{\alpha,\beta}_k$ to $g$, we get the family of curves
   \[
 S^{\alpha,\beta}_k g(t) =  \alpha^{-1}(1 + \alpha \beta r(t))  e^{-kt} \left[a + b  \left( \frac{ \alpha^2 r(t)}{1 + \alpha \beta r(t)} \right)^2 \right]  .
    \]
 Recalling the notation $p_k^g(t)dt = \mathbb{P}\Big(  T_{k}^{g} \in dt  \Big)$, applying Theorem \ref{theorem OU transformation} and simplifying,  we obtain
\begin{eqnarray*}
p_k^{S^{\alpha,\beta}_k g}(t) &  = &2e^{2kt} \alpha^2 (ba^2 c)^2(1+ \alpha \beta r(t))^{-3/2}  e^{- \frac{\alpha \beta }{2(1 + \alpha \beta r(t))} (S_{k}^{\alpha,\beta}g(t))^2 \; e^{2kt}}  \; 
 e^{-2/3 \; b^2 \left( \frac{ \alpha^2 r(t)}{1 + \alpha \beta r(t)} \right)^3} \\
      	  & &\times \sum_{k=0}^{\infty} \frac{Ai(z_k + 2c ba^3)}{Ai'(z_k)} e^{- \frac{z_k}{a^2}  \frac{ \alpha^2 r(t)}{1 + \alpha \beta r(t)} }.
\end{eqnarray*}
    }
\end{example}

\section{Properties of transformations} \label{section properties of transformation}

We now state a result that gives an insight into our $S^{\alpha,\beta}_k$ transformation. 

\begin{lemma} \label{lemma nonlinear ODE}
\begin{enumerate}[label=(\arabic*)]
    \item  Let $(\alpha, \beta)$ and $(\alpha', \beta') \in \mathbb{R}^{*} \times \mathbb{R}$ and $A_{k,\infty}$ be defined as in \eqref{funtional space infinite}. The mapping $S^{\alpha,\beta}_k: A_{k,\infty} \to A_{k,\infty}$ defined by (\ref{S hat decomposition}) admits representation \eqref{S hat transformation} and satisfies 
    
    \begin{equation} \label{S hat composition property}
            S^{\alpha, \beta}_k \circ S^{\alpha', \beta'}_{k} = S^{\alpha \alpha', \alpha \beta' + \frac{\beta}{\alpha'}}_{k}. 
    \end{equation}

     In particular, $(S^{1,\beta}_{k})_{\beta\in\mathbb{R}}$  {and $( S^{e^\alpha,0}_{k})_{\alpha\in\mathbb{R}}$ are groups}.
    Furthermore, for $a,b>0$ and $A_{k}(a,b)$ defined as in \eqref{funcional space finite}, if $f \in A_{k}(a,b)$, then $S^{\alpha, \beta}_{k}f \in A_{k}(a_{\alpha, \beta}, b_{\alpha, \beta}^f)$, where we set
\[
 a_{\alpha, \beta} = \begin{cases}
 \frac{a}{\alpha^2 - \alpha \beta a} & \text{if} \quad \alpha^2 - \alpha \beta a>0, \; k \geq 0 ;\\   
  \frac{a}{\alpha^2 - (\alpha \beta + 2k \alpha^2)a}  & \text{if} \quad 0<\frac{2k a}{(\alpha \beta + 2k \alpha^2)a - \alpha^2}<1, \; k<0 ;\\   
  + \infty & \text{otherwise},
\end{cases}
\]
and we wrote for $f \in A_{k,\infty}$
\[
 b_{\alpha, \beta}^f = \begin{cases}
b  & \text{if} \quad\alpha^2 - \alpha  \beta b >0, \; k\geq0; \\  
\frac{b}{1-2kb} & \text{if} \quad  0<\frac{2k b}{(\alpha \beta + 2k \alpha^2)b - \alpha^2}<1, \; k<0; \\
 \rho \circ \tau f\left(s\left(\frac{\alpha}{\beta}\right)\right) & \text{otherwise}.
\end{cases}
\]
Observe that $a_{\alpha, \beta} \to r(\zeta_{k,\alpha, \beta})$ as $a \to \infty$.

    \item Let $\mu$ be a positive Radon measure on $\mathbb{R}^{+}$. Then, there exists a unique positive and differentiable function $\mathfrak{f}$ with $\mathfrak{f}(0)=1$ and $\mathfrak{f}'+ k \mathfrak{f} + \frac{k}{(2k \tau \mathfrak{f}+1)\mathfrak{f}}> 0$, which satisfies the following nonlinear differential equation

    \begin{equation} \label{nonlinear ODE}
    f^3 f'' - k^2 f^4= \frac{-\mu(s(\tau f)) + k^2}{(2 k (\tau f)+1)^2},
    \end{equation}
{where $f''$ is the second derivative in the sense of distributions.} Moreover,   $\{S^{\alpha,\beta}_k \mathfrak{f}; \alpha>0, \beta \in \mathbb{R} \}$ spans the set of positive solutions of (\ref{nonlinear ODE}). 
\end{enumerate}
\end{lemma}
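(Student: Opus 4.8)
The whole proof hinges on the conjugation identity $S^{\alpha,\beta}_k=\Lambda_k^{-1}\circ S^{\alpha,\beta}\circ\Lambda_k$, which imports the Brownian results of \cite{Alili patie ode} wholesale. To establish it I would first show that $\tau$ intertwines $\Lambda_k$ with the time change $s$, namely $\tau(\Lambda_k f)=(\tau f)\circ s$: substituting $u=s(z)$ in $\int_0^t(\Lambda_k f(z))^{-2}dz$ and using $dz=e^{2ku}du$, $e^{-2ks(z)}=e^{-2ku}$ collapses the integrand to $f(u)^{-2}du$. Combined with \eqref{Pi}, a one-line check then gives $\Pi^{\alpha,\beta}\circ\Lambda_k=\Lambda_k\circ\Pi^{\alpha,\beta}$. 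Substituting this and $\Sigma_k=\Lambda_k^{-1}\circ\Sigma\circ\Lambda_k$ from \eqref{sigma hat composition} into \eqref{S hat decomposition} cancels the interior $\Lambda_k\circ\Lambda_k^{-1}$ and yields $S^{\alpha,\beta}_k=\Lambda_k^{-1}\circ(\Sigma\circ\Pi^{\alpha,-\beta}\circ\Sigma)\circ\Lambda_k=\Lambda_k^{-1}\circ S^{\alpha,\beta}\circ\Lambda_k$. Representation \eqref{S hat transformation} is then obtained by composing $\Lambda_k$, the explicit $S^{\alpha,\beta}$ of \eqref{tranformation brownian motion}, and $\Lambda_k^{-1}$ in order, reproducing exactly the chain in Figure \ref{flow chart of transformation}.

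The composition law \eqref{S hat composition property} is then immediate: $S^{\alpha,\beta}_k\circ S^{\alpha',\beta'}_k=\Lambda_k^{-1}\circ(S^{\alpha,\beta}\circ S^{\alpha',\beta'})\circ\Lambda_k$, and inserting the Brownian rule $S^{\alpha,\beta}\circ S^{\alpha',\beta'}=S^{\alpha\alpha',\alpha\beta'+\beta/\alpha'}$ gives $S^{\alpha\alpha',\alpha\beta'+\beta/\alpha'}_k$. The two group statements follow at once: $(S^{1,\beta}_k)_\beta$ has identity $S^{1,0}_k=Id$ and inverse $S^{1,-\beta}_k$, while $S^{e^\alpha,0}_k\circ S^{e^{\alpha'},0}_k=S^{e^{\alpha+\alpha'},0}_k$. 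For the endpoint bookkeeping $f\in A_k(a,b)\Rightarrow S^{\alpha,\beta}_kf\in A_k(a_{\alpha,\beta},b^f_{\alpha,\beta})$, I would again pass through $\Lambda_k$: the intertwining $\tau(\Lambda_k f)=(\tau f)\circ s$ shows that $\Lambda_k$ carries the class $A_k(a,b)$ of \eqref{funcional space finite} into a Brownian class $A(a',b')$, on which the endpoint action of $S^{\alpha,\beta}$ through the M\"obius map $t\mapsto \alpha^2 t/(1+\alpha\beta t)$ is understood, after which $\Lambda_k^{-1}$ returns the OU endpoints. I expect the bulk of the work here to be the case split forced by $\sgn(k)$ and the finite horizon $\zeta^{(k)}$ built into \eqref{funcional space finite}, together with checking the stated limit $a_{\alpha,\beta}\to r(\zeta_{k,\alpha,\beta})$ as $a\to\infty$; this is routine but fiddly.

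For part (2) I would reduce the nonlinear equation \eqref{nonlinear ODE} to its Brownian prototype. Recall from the excerpt that the solutions of \eqref{heat SL equation} are $\{\Pi^{\alpha,\beta}\varphi\}$ for the unique positive decreasing $\varphi$ with $\varphi(0)=1$; a short computation shows that if $\phi$ solves $\phi''=\nu\phi$ then $g:=\Sigma\phi$ solves $g^3g''=-\nu(\tau g)$, since writing $w=\rho\circ\tau\phi$ one finds $g'=-\phi'(w)$, $g''=-\nu(w)g^{-3}$ and $\tau g=w$. Thus $\Sigma$ transports linear solutions to nonlinear ones, and $\{S^{\alpha,\beta}g\}$ spans the positive solutions of the Brownian nonlinear equation. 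Setting $\mathfrak{f}:=\Lambda_k^{-1}g$ and differentiating through the time change $r$, with $\tau\mathfrak{f}=(\tau g)\circ r$ and the identities $g(r)=e^{kt}\mathfrak{f}$, $g'(r)=e^{-kt}(\mathfrak{f}'+k\mathfrak{f})$, $g''(r)=e^{-3kt}(\mathfrak{f}''-k^2\mathfrak{f})$, multiplication by $g^3(r)=e^{3kt}\mathfrak{f}^3$ turns $g^3g''=-\nu(\tau g)$ into $\mathfrak{f}^3\mathfrak{f}''-k^2\mathfrak{f}^4=-\nu(\tau\mathfrak{f})$. Matching with \eqref{nonlinear ODE} forces the measure correspondence $\nu(x)=(2kx+1)^{-2}(\mu(s(x))-k^2)$, which is exactly where the factor $(2k\tau f+1)^{-2}$, the composition $s(\tau f)$ and the constants $\pm k^2$ come from. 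Existence, uniqueness and the spanning claim $\{S^{\alpha,\beta}_k\mathfrak{f};\alpha>0,\beta\in\mathbb{R}\}$ then transfer through the bijection $\Lambda_k^{-1}$ and the identity $S^{\alpha,\beta}_k\mathfrak{f}=\Lambda_k^{-1}S^{\alpha,\beta}g$, while $\mathfrak{f}(0)=g(0)=\Sigma\varphi(0)=1/\varphi(0)=1$ is immediate.

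It remains to interpret the constraint $\mathfrak{f}'+k\mathfrak{f}+k/((2k\tau\mathfrak{f}+1)\mathfrak{f})>0$, which I read as the $\Lambda_k^{-1}$-image of the Brownian monotonicity that singles out $g=\Sigma\varphi$ from the decreasing $\varphi$: translating ``$g$ increasing'' through $g'(r)=e^{-kt}(\mathfrak{f}'+k\mathfrak{f})$ produces $\mathfrak{f}'+k\mathfrak{f}>0$, and the extra term $k/((2k\tau\mathfrak{f}+1)\mathfrak{f})$ is precisely the correction introduced by the $-k^2$ shift in the measure correspondence. The principal obstacle throughout part (2) is analytic rather than algebraic: since $\mu$ (equivalently $\nu$) is only a Radon measure, every second derivative must be manipulated in the sense of distributions while performing the nonlinear substitutions $w=\rho\circ\tau\phi$ and $t\mapsto r(t)$, and one must verify that $\nu\leftrightarrow\mu$ is a genuine bijection of positive Radon measures so that existence and uniqueness truly carry over. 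Producing the exact right-hand side of \eqref{nonlinear ODE} and the precise positivity condition simultaneously is the delicate point.
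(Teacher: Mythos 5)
Your part (1) is correct and is essentially the paper's own argument: the paper likewise derives $S_k^{\alpha,\beta}=\Lambda_k^{-1}\circ S^{\alpha,\beta}\circ\Lambda_k$ from $\Sigma_k=\Lambda_k^{-1}\circ\Sigma\circ\Lambda_k$ together with the intertwining $\Pi^{\alpha,\beta}\circ\Lambda_k=\Lambda_k\circ\Pi^{\alpha,\beta}$ (which rests on exactly your identity $\tau(\Lambda_kf)=(\tau f)\circ s$), and then imports the Brownian composition law and does the endpoint bookkeeping via $\tau S_k^{\alpha,\beta}f(t)=\tau f\left(s\left(\frac{\alpha^2r(t)}{1+\alpha\beta r(t)}\right)\right)$.

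In part (2), however, there is a genuine gap. You transport the linear equation to the Brownian side and want to invoke the canonical solution (unique, positive, decreasing, equal to $1$ at $0$) of $\phi''=\nu\phi$ for the transported measure $\nu(x)=(2kx+1)^{-2}\left(\mu(s(x))-k^2\right)$. That theory requires $\nu$ to be a \emph{positive} Radon measure, and your $\nu$ is in general signed: already for $\mu=0$ and $k>0$ one has $\nu(x)=-k^2(2kx+1)^{-2}<0$, and the relevant positive solution of $\psi''=\nu\psi$ is $\psi(t)=\sqrt{2kt+1}=\Lambda_k 1$, which is increasing, so there is no positive decreasing solution to anchor your construction. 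Your closing remark that one must verify that $\nu\leftrightarrow\mu$ is a bijection of positive Radon measures is therefore not a routine verification; the claim is false. The paper avoids this by keeping the Sturm--Liouville equation with the given positive $\mu$ and applying $\Sigma_k$ directly: setting $\phi=\Sigma_kf$ and differentiating the identity $\phi(s(\tau f(t)))f(t)=e^{-ks(\tau f(t))-kt}$ twice shows that $\phi''=\mu\phi$ is equivalent to \eqref{nonlinear ODE}, so existence, uniqueness and the spanning claim come from the canonical $\varphi$ for $\mu$ itself; moreover the first derivative of that identity shows that the constraint $\mathfrak{f}'+k\mathfrak{f}+\frac{k}{(2k\tau\mathfrak{f}+1)\mathfrak{f}}>0$ is precisely the statement that $\Sigma_k\mathfrak{f}$ is decreasing, a point you assert (``the extra term is precisely the correction'') but do not derive. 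You could repair your route by observing that the positive solutions of $\psi''=\nu\psi$ are exactly the $\Lambda_k$-images of the positive solutions of $\varphi''=\mu\varphi$, but that amounts to returning to the $\mu$-equation, i.e., to the paper's argument.
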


Before proving {Lemma} \ref{lemma nonlinear ODE}, we start by discussing some of the properties of $\Sigma_{k}$ and $\Pi^{\alpha,\beta} $. 

\begin{proposition} \label{prop sigma hat properties}
   \begin{enumerate}[label=(\arabic*)]
   For any $a,b>0$ and $f \in A_{k,\infty}$, we have the following assertions.
       \item $\Sigma_{k}$ is an involution operator, that is $\Sigma_{k} \circ \Sigma_{k} = Id$ on $A_{k,\infty}$.
       \item $\tau f (t) = r(\rho \circ \tau \circ \Sigma_{k} f(r(t)))$.
       \item $\Sigma_{k}(A_{k}(a,b)) = A_{k}(b, a)$.
       \item $\Pi^{\alpha,\beta} \Lambda_{k}  = \Lambda_{k} \Pi^{\alpha,\beta} $.
   \end{enumerate}
\end{proposition}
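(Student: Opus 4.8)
The plan is to prove all four assertions by transferring the corresponding Brownian facts through the conjugation $\Sigma_k = \Lambda_k^{-1}\circ\Sigma\circ\Lambda_k$ of \eqref{sigma hat composition}. The single computational ingredient on which everything rests is the behaviour of $\tau$ under $\Lambda_k$ and $\Lambda_k^{-1}$. First I would record the identities
\[
\tau(\Lambda_k f)(t) = \tau f(s(t)), \qquad \tau(\Lambda_k^{-1}g)(t) = \tau g(r(t)),
\]
obtained from the substitution $u = s(z)$ in $\int_0^t (\Lambda_k f(z))^{-2}\,dz$, using that $e^{2ks(z)} = 2kz+1$, that $r'(u) = e^{2ku} = 2kr(u)+1$, and that $r,s$ are mutually inverse so that $\rho s = r$. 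I would also recall from the Brownian setting the identity $\tau\circ\Sigma = \rho\circ\tau$, which follows from the change of variables $w = (\tau f)^{-1}(z)$ in $\int_0^t f((\tau f)^{-1}(z))^{2}\,dz$, since $(\tau f)'(w) = f(w)^{-2}$.

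Assertion (1) is then immediate, because conjugation preserves involutions:
\[
\Sigma_k\circ\Sigma_k = \Lambda_k^{-1}\circ\Sigma\circ(\Lambda_k\circ\Lambda_k^{-1})\circ\Sigma\circ\Lambda_k = \Lambda_k^{-1}\circ(\Sigma\circ\Sigma)\circ\Lambda_k = \Lambda_k^{-1}\circ\Lambda_k = Id,
\]
using that $\Sigma$ is an involution and $\Lambda_k$ is invertible. Assertion (4) is a direct verification: by the first displayed identity, both $\Pi^{\alpha,\beta}(\Lambda_k f)(t)$ and $\Lambda_k(\Pi^{\alpha,\beta}f)(t)$ reduce to $e^{ks(t)}f(s(t))\big(\alpha + \beta\,\tau f(s(t))\big)$, so they coincide.

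For assertion (2) I would combine the three identities above. Writing $h = \Lambda_k f$ and using $\Sigma_k f = \Lambda_k^{-1}(\Sigma h)$, the second conjugation identity gives $\tau(\Sigma_k f)(t) = \tau(\Sigma h)(r(t))$; the Brownian identity gives $\tau(\Sigma h) = \rho(\tau h)$; and since $\tau h = \tau f\circ s$ one has $\rho(\tau h) = r\circ\rho(\tau f)$. Chaining these yields the master formula
\[
\tau(\Sigma_k f)(t) = r\big(\rho\circ\tau f(r(t))\big).
\]
Substituting $f \mapsto \Sigma_k f$ and invoking the involution property (1) turns the left-hand side into $\tau f(t)$ and produces exactly the claimed identity $\tau f(t) = r(\rho\circ\tau\circ\Sigma_k f(r(t)))$.

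Assertion (3) is where the real work lies. The clean part is the transformation of the defining \emph{value}: for $f \in A_k(a,b)$ the master formula gives, at the relevant evaluation point, $\tau(\Sigma_k f)(s(b)) = r(\rho\circ\tau f(r(s(b)))) = r(\rho\circ\tau f(b)) = r(s(a)) = a$ for $k\ge 0$, which is precisely the condition defining $A_k(b,a)$ and exhibits the swap $a\leftrightarrow b$. The delicate part is matching the domain endpoints and, for $k<0$, accounting for the finite horizon $\zeta^{(k)} = -1/2k$: this is exactly what the factors $\sgn(k)s(\sgn(k)a)$ and the normalisation $b/(1-\mathbbm{1}_{\{k<0\}}2kb)$ in \eqref{funcional space finite} are designed to encode. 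Verifying that $\Lambda_k$ carries $A_k(a,b)$ onto the Brownian set $A(\cdot,\cdot)$ with the correct parameters, so that $\Sigma(A(a,b)) = A(b,a)$ can be applied and then descended through $\Lambda_k^{-1}$, requires a careful case analysis on the sign of $k$. I expect this bookkeeping — tracking how the time changes $r,s$ move the endpoints, together with the $k<0$ normalisation — to be the main obstacle, whereas the value swap and the algebraic identities (1), (2) and (4) are routine consequences of the conjugation.
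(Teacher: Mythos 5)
Your proof is correct and follows essentially the same route as the paper: the paper also deduces (1) from the conjugation \eqref{sigma hat composition} together with the involutivity of $\Sigma$, proves (4) by the change of variables giving $\tau\Lambda_k f=\tau f\circ s$, and obtains (2) by first establishing the master formula $\tau\circ\Sigma_k f(t)=r(\rho\circ\tau f(r(t)))$ (computed there directly from \eqref{sigma hat functional} rather than by chaining the conjugation identities --- an immaterial difference) and then substituting $f\mapsto\Sigma_k f$. The only piece you leave undone, the $k<0$ branch of (3), is not the obstacle you anticipate: it is the same one-line evaluation of the master formula at the appropriate point, and no passage through the Brownian sets $A(\cdot,\cdot)$ is needed. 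Indeed, for $k<0$ and $f\in A_k(a,b)$ the defining condition in \eqref{funcional space finite} reads $\tau f(-s(-a))=b/(1-2kb)$, i.e. $\rho\circ\tau f\bigl(b/(1-2kb)\bigr)=-s(-a)$; since $r(-s(-b))=b/(1-2kb)$, the master formula yields $\tau\circ\Sigma_k f(-s(-b))=r\bigl(\rho\circ\tau f(b/(1-2kb))\bigr)=r(-s(-a))=a/(1-2ka)$, which is precisely the condition for $\Sigma_k f\in A_k(b,a)$. This is exactly how the paper handles it.
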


\begin{proof}
 \begin{enumerate}[label=(\arabic*)]
       \item  {The involution property follows from  (\ref{sigma hat composition}) and the fact that $\Sigma$ is an involution.}  
       \item {By using \eqref{sigma hat functional}, we obtain }
       $$\tau \circ \Sigma_{k}f(t) = \int_{0}^{t} f^2(\rho \circ \tau f(r(y))) e^{2ky + 2k \rho \circ \tau f(r(y))} dy = \int_{0}^{\rho \circ \tau f(r(t))} e^{2kz} dz = r(\rho \circ \tau f(r(t))),$$ 
       {where we used a change of variables in the second equality. The assertion follows using the fact that $\Sigma_{k}$ is an involution.}
       \item First, consider $k<0$ and take $f \in A_{k}(a,b)$. Observe that $\rho \circ \tau f(b/(1-2kb)) = -s\big(-a\big)$. Now, by part (2),  $\tau \circ \Sigma_{k}f\left(-s\left(-b\right)\right) = r(\rho \circ \tau f(b/(1-2kb)) = a/(1-2ka)$ and so $\Sigma_{k}f \in A_{k}(b,a)$. Similarly, the result holds for $k>0$.
       
       \item $\tau \Lambda_{k} f(t) = \int_{0}^{t} \left[e^{2k s(y)} f^2(s(y))\right]^{-1} dy = \tau f(s(t))$, which follows by a change of variable. The identity can easily be derived now.
   \end{enumerate}
\end{proof}
\begin{proof}[Proof of Lemma \ref{lemma nonlinear ODE}]
\begin{enumerate}[label=(\arabic*)]
    \item Starting from the definition of $S_k^{\alpha,\beta}$ in \eqref{S hat decomposition} and using the decomposition of $\Sigma_k$ in \eqref{sigma hat composition}, identity (4) from Proposition \ref{prop sigma hat properties} and the decomposition of $S^{\alpha,\beta}$ given in \eqref{S0BM}, we can write
\[
   S_{k}^{\alpha, \beta}  =   \Sigma_{k} \circ \Pi^{\alpha, -\beta} \circ \Sigma_{k} = \Lambda_{k}^{-1} \circ  (\Sigma  \circ \Pi^{\alpha, -\beta} \circ \Sigma) \circ \Lambda_{k}  =  \Lambda_{k}^{-1} \circ S^{\alpha,\beta} \circ \Lambda_{k} .
\]
    
Now, if we use the functional definitions of the operators, we get the representation \eqref{S hat transformation}, as illustrated in the previously reported Figure \ref{flow chart of transformation}. {Next, property (\ref{S hat composition property}) follows from the similar group property of the family of operators $(S^{\alpha, \beta})_{(\alpha , \beta) \in \mathbb{R}^* \times \mathbb{R}}$, as stated in Section \ref{Section21}, see also Proposition 2.3 of \cite{Alili patie ode}.} Lastly, using the identity 
$\tau S_{k}^{\alpha,\beta}f(t) = \tau f \left(s\left( \frac{ \alpha^2 r(t)}{1 + \alpha \beta r(t)} \right) \right)$ which follows readily by a change of variables, we get that if $f \in A_{k}(a,b)$, then $S_{k}^{\alpha, \beta}f \in A_{k}(a_{\alpha, \beta}, b_{\alpha, \beta}^f)$.
 
 \item As discussed in Section  \ref{section notations}, if $\phi$ solves the Sturm-Liouville equation (\ref{heat SL equation}), then the vectorial space $\{\Pi^{\alpha,\beta} \phi ; \alpha, \beta \in \mathbb{R}\}$ is the set of solutions to the same equation. Moreover, all positive solutions are convex and {are} described by the set $\{\Pi^{\alpha,\beta} \varphi ; \alpha>0, \beta \geq 0\}$, where $\varphi$ is the unique, positive, decreasing solution such that $\varphi(0)=1$. Now, we need to show that the image of (\ref{nonlinear ODE}) by $\Sigma_{k}$ is (\ref{heat SL equation}) and vice-versa. Let $\phi(t) = \Sigma_{k}f(t)$. Then, we obtain $\phi(s(\tau f(t))) f(t) = e^{-k s(\tau f(t)) - kt}$. Differentiating both side once, we get
\[
\phi'(s(\tau f(t))) = - e^{-k s(\tau f(t))-kt} (2k\tau f(t)+1) \left(kf(t) +  f'(t) + \frac{k}{(2k\tau f(t)+1)f(t)}  \right).
\]
Differentiating again, we obtain 
\[
 \phi''(s(\tau f(t)))  = 
 - (2k\tau f(t)+1)^2 e^{-k s(\tau f(t)) - kt}  \left( f^2(t) f''(t)  - k^2 f^3(t) - \frac{k^2}{(2k\tau f(t)+1)^2 f(t)}\right),
\]
So if $\phi$ satisfies (\ref{heat SL equation}), we get (\ref{nonlinear ODE}) and vice versa. Now, by letting $\mathfrak{f} = \Sigma_{k} \varphi$, we see that it satisfies the required properties. 
\end{enumerate}
\end{proof}


\section{Proofs of Theorem \ref{theorem OU transformation}} \label{section proofs of theorem 3.1}

We provide three different proofs of Theorem \ref{theorem OU transformation}. The first method is a direct approach that uses results from \cite{Alili patie ode}. The second approach uses a generalisation of the so-called Gauss-Markov processes and gives an insight into some of the results given in {Lemma} \ref{lemma nonlinear ODE}. The third proof relies on the Lie group techniques applied to the {Fokker-Planck equation \eqref{OU fokker planck equation}.} {We also}
 mention that, similarly as in the BM case, it is sufficient to only consider the case where $\alpha>0$. This is because, by the symmetry of the BM and the OU process starting at 0 without drift, hitting for the first time negative or positive valued thresholds has the same probability. Hence, for any $(\alpha,\beta) \in \mathbb{R}^{*} \times \mathbb{R}$, we have $
T_{k}^{S_{k}^{\alpha,\beta}f} \stackrel{\text{d}}{=} T_{k}^{S_{k}^{|\alpha|,\sgn(\alpha)\beta}f},$
where $ \stackrel{\text{d}}{=}$ denotes equality in distribution.

\subsection{First proof of Theorem \ref{theorem OU transformation}}\label{Section5.1}

\begin{proof}[\unskip\nopunct]
 We aim to use \eqref{BM FPT relation} which connects the FPT distribution of $  S^{\alpha,\beta} f$ to that of $f$. We can connect the FPTs of OU and BM in the following manner,

\begin{equation} \label{OU brownian stopping time relation}
	 T_{k}^{f}  = \inf \{t>0 ; \;  U_t = f(t) \}  
		 =  \inf \{s(t)> 0 ; \; W_{t} = e^{ks(t)} f(s(t)) \} 
		 = s(T^{\Lambda_{k} f}).
\end{equation}
Note that $T_{k}^{S_{k}^{\alpha,\beta} f} = s(T^{S^{\alpha,\beta} \Lambda_{k} f}).$ Writing $p_k^{f}(t)  dt = \mathbb{P}\left( T_{k}^{ f} \in dt  \right)$ and $p^{f}(t)  dt = \mathbb{P}\left( T^{f} \in dt \right)$, we get
\begin{equation} \label{f hat to g alpha}
   p_k^{ S_{k}^{\alpha,\beta} f}(t) = e^{2kt} \; p^{S^{\alpha,\beta} \Lambda_{k} f}(r(t)).
\end{equation}
Now, using (\ref{BM FPT relation}) and (\ref{tranformation brownian motion}), we can rewrite the right hand side as follows:
\begin{equation} \label{g alpha beta to g}
\begin{aligned}
 p^{S^{\alpha,\beta} \Lambda_{k} f}(r(t)) & = \alpha^3 (1+ \alpha \beta r(t))^{-5/2} e^{- \frac{\alpha \beta }{2(1 + \alpha \beta r(t))} (S^{\alpha,\beta} \Lambda_{k} f(r(t)))^2} S^{\alpha,\beta} \Big( p^{\Lambda_{k} f}(r(t))\Big) \\
& = \alpha^2 (1+ \alpha \beta r(t))^{-3/2} e^{- \frac{\alpha \beta }{2(1 + \alpha \beta r(t))} (S^{\alpha,\beta} \Lambda_{k} f(r(t)))^2}    p^{\Lambda_{k} f} \left(\frac{ \alpha^2 r(t)}{1 + \alpha \beta r(t)} \right).
\end{aligned}
\end{equation}
By using (\ref{OU brownian stopping time relation}), we get
\begin{equation} \label{T^g to T^f}
  p^{\Lambda_{k} f} \left(\frac{ \alpha^2 r(t)}{1 + \alpha \beta r(t)} \right) = \frac{1}{2k \frac{\alpha^2 r(t)}{1+ \alpha \beta r(t)} + 1} \; p_k^{f} \left( s\left( \frac{ \alpha^2 r(t)}{1 + \alpha \beta r(t)} \right) \right).
\end{equation}
Notice that $S^{\alpha,\beta} \Lambda_{k} f(r(t)) = e^{kt} S_{k}^{\alpha,\beta}f(t) $. Now, plugging (\ref{T^g to T^f}) into (\ref{g alpha beta to g}) and then into (\ref{f hat to g alpha}), we get
\[
\begin{aligned}
  p_k^{ S_{k}^{\alpha,\beta}f}(t) &
 = e^{2kt} \alpha^2 \frac{(1+ \alpha \beta r(t))^{-3/2}}{\big(2k \frac{\alpha^2 r(t)}{1+ \alpha \beta r(t)} + 1 \big)}  e^{- \frac{\alpha \beta }{2(1 + \alpha \beta r(t))} (S_{k}^{\alpha,\beta}f(t))^2 \; e^{2kt}}  \; p_k^{f} \left( s\left( \frac{ \alpha^2 r(t)}{1 + \alpha \beta r(t)} \right) \right)\\
& = e^{3kt} \alpha^3 (1+ \alpha \beta r(t))^{-5/2} \bigg(2k \frac{\alpha^2 r(t)}{1+ \alpha \beta r(t)} + 1 \bigg)^{-3/2} e^{- \frac{\alpha \beta }{2(1 + \alpha \beta r(t))} (S_{k}^{\alpha,\beta}f(t))^2 \; e^{2kt}} \; S_{k}^{\alpha,\beta} \bigg(p_k^{f}(t) \bigg),
\end{aligned}
\]
where we used (\ref{S hat transformation}) to obtain the last equation.
\end{proof}

\subsection{Second proof of Theorem \ref{theorem OU transformation}}
 We take $\phi \in A_{k}(a,b) \cap AC([0,b))$, where $AC([0,b))$ is the space of absolutely continuous functions on $[0,b)$. We introduce the process $X:=(X_t)_{0 \leq t <b}$ to be the generalised Gauss-Markov process of OU type with parameters $(\phi,k)$, which is defined as the unique strong solution to the following SDE
 \[
 dX_t = \left(\frac{\phi'(t)}{\phi(t)} + k \right )X_t dt + e^{kt} dB_t, \quad 0 \leq t < b,
 \]
with $X_0=x \in \mathbb{R}$, i.e.,
\[
X_t = \phi(t) e^{kt} \left( x +  \int_{0}^{t} \frac{1}{\phi(s)} dB_s \right), \quad 0 \leq t < b.
\]
We also denote by $\mathbb{P}^{(\phi,k)} = \left(\mathbb{P}^{(\phi,k)}_{x} \right)_{x \in \mathbb{R}}$ the family of probability measures corresponding to the process X. We assume throughout that $\phi(0) = 1$. Notice that when $\phi(t) = e^{-kt}$, we get $X_t = W_{r(t)}$, where $(W_t, t \geq 0)$ is another BM.

\begin{lemma} \label{lemma stopping time relations}
For any $y \in \mathbb{R}$, set $T_y = \inf \{0<t<b ; \;  \phi(t) e^{kt} \   \int_{0}^{t} \frac{1}{\phi(s)} dB_s = y  \}$. Then, for any $f \in  A_k(a,b)$, setting $\phi = \Sigma_{k}f$, the identity 
\[
    T_{k}^{f} = s(\tau \phi(T_{1}))
\]
holds almost surely.
\end{lemma}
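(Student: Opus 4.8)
The plan is to strip away the Ornstein--Uhlenbeck drift by a Dambis--Dubins--Schwarz time change, reducing the claim to a pure Brownian first-passage identity, and then to transport that identity back to the OU clock through \eqref{OU brownian stopping time relation}. Since $s$ is a continuous strictly increasing bijection, it is equivalent to prove that $\tau\phi(T_1) = T^{\Lambda_k f}$, where $T^{\Lambda_k f} = \inf\{v>0;\ \widetilde W_v = \Lambda_k f(v)\}$ for the Brownian motion $\widetilde W$ produced below; combining this with \eqref{OU brownian stopping time relation} then yields $T_k^f = s(T^{\Lambda_k f}) = s(\tau\phi(T_1))$. I would make the coupling explicit at the outset: let $\widetilde W$ be the time-changed Brownian motion of the continuous local martingale $N_t=\int_0^t \phi^{-1}(s)\,dB_s$, whose bracket is exactly $\langle N\rangle_t = \tau\phi(t)$, and realise the OU process defining $T_k^f$ as $U_t = e^{-kt}\widetilde W_{r(t)}$, as in \eqref{OU brownian motion relation}. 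This is the delicate point, and I return to it below.

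With $x=0$ the process in the definition of $T_y$ is $M_t := \phi(t)e^{kt}N_t = \phi(t)e^{kt}\,\widetilde W_{\tau\phi(t)}$. Because $\phi$ is positive and continuous with $\phi^{-1}$ locally square integrable on $[0,b)$, the map $t\mapsto v=\tau\phi(t)$ is a continuous strictly increasing bijection of $[0,b)$ onto $[0,\tau\phi(b^-))$, so first-passage events are carried over by it and their infima correspond. Writing $\sigma=\rho\circ\tau\phi$ for its inverse and solving $M_t=1$ in the new clock gives $\widetilde W_v = e^{-k\sigma(v)}/\phi(\sigma(v)) =: c(v)$; hence $\tau\phi(T_1) = \inf\{v>0;\ \widetilde W_v = c(v)\}$, the first passage of $\widetilde W$ to the curve $c$.

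It then remains to identify $c$ with $\Lambda_k f$. Applying $\Lambda_k$ and the involution $\Sigma$ to $\phi=\Sigma_k f = \Lambda_k^{-1}\circ\Sigma\circ\Lambda_k f$ from \eqref{sigma hat composition} gives $\Lambda_k f = \Sigma(\Lambda_k\phi)$. I would then compute $\Sigma(\Lambda_k\phi)$ directly from the definition $\Sigma g(v)=1/g(\rho\circ\tau g(v))$, using the identity $\tau\Lambda_k\phi(v)=\tau\phi(s(v))$ established in the proof of Proposition \ref{prop sigma hat properties}(4) together with $s\circ r=\mathrm{id}$ and $r\circ s=\mathrm{id}$; a short calculation shows $\rho\circ\tau(\Lambda_k\phi) = r\circ\sigma$ and therefore $\Sigma(\Lambda_k\phi)(v) = 1/\big(e^{k\sigma(v)}\phi(\sigma(v))\big) = c(v)$. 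Thus $c=\Lambda_k f$, so $\tau\phi(T_1)=T^{\Lambda_k f}$, and the claimed identity follows.

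The main obstacle is not the algebra but setting up the coupling so that the identity is genuinely almost sure rather than only in law: the Brownian motion implicitly used in \eqref{OU brownian stopping time relation} must be the same $\widetilde W$ arising from the time change applied to $N$, which is why I build $U$ from $\widetilde W$; one should then check that the resulting $U$ has the correct OU law and that $T_1<\infty$ with the hitting occurring inside $[0,b)$, so that the time-change correspondence is valid. The remaining care is purely measure-theoretic, namely justifying that the deterministic, strictly monotone clock $\tau\phi$ maps the first hitting time of $M$ to level $1$ exactly onto the first hitting time of $\widetilde W$ to the curve $c$, including the boundary behaviour at $b$.
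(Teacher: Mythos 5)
Your proposal is correct and follows essentially the same route as the paper's proof: both rest on writing $\int_0^t\phi^{-1}(s)\,dB_s$ as a time-changed Brownian motion with clock $\tau\phi$, converting the level-crossing $M_t=1$ into a curve-crossing for that Brownian motion, and identifying the resulting curve with $f$ via the involution $\Sigma_k\phi=f$ (you do this in the Brownian clock through $\Lambda_k f=\Sigma(\Lambda_k\phi)$ and \eqref{OU brownian stopping time relation}, the paper does it directly in the OU clock). Your explicit attention to the coupling --- that the almost-sure identity requires building $U$ from the same Dambis--Dubins--Schwarz Brownian motion $\widetilde W$ --- is a point the paper's one-line proof passes over silently, and is worth making.
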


\begin{proof}
Using (\ref{OU brownian motion relation}), we have a.s.

\[
\begin{aligned}
T_{1} & = \inf \{t >0 ; \;    \phi(t) e^{kt} W_{\tau \phi(t)} = 1 \} 
= \inf \left\{t>0 ; \; U_{s(\tau \phi(t))} =  \frac{e^{-kt - k s(\tau \phi(t))}}{\phi(t)} \right\} = \rho \circ \tau \phi (r(T_{k}^{f}))
\end{aligned} 
\]
and the result follows.
\end{proof}

Next, we introduce the following notation
\[
    H_{t}^{k}(x) = \sqrt{\frac{\alpha \phi(t)}{\Pi^{\alpha, \beta} \phi(t)} } 
    \exp\left(\frac{\beta}{2} \frac{x^2 e^{-2kt}}{\phi(t) \Pi^{\alpha, \beta} \phi(t)}\right).
\]
Our aim now is to show that the parametric families of distributions $\left(\mathbb{P}^{(\Pi^{\alpha, \beta} \phi,k)}\right)_{(\alpha,\beta) \in \mathbb{R}^* \times \mathbb{R}}$ of generalised
Gauss-Markov processes are related by some space-time harmonic transformations. The proof of the following proposition is similar to that of Lemma 3.2 in \cite{Alili patie ode}. However, the proof therein has typos, so we give the full proof here.

\begin{proposition} \label{Prop gauss absolute cont measure}
For $(\alpha,\beta) \in \mathbb{R}^* \times \mathbb{R}$ and $\phi$ as above, the process $(H_t^{k}(X_t))_{0 \leq t < a_{\alpha, -\beta}^{\phi}}$ is a $\mathbb{P}^{(\phi,k)}$-martingale. Furthermore, the absolute-continuity relationship
\[
    d \mathbb{P}^{(\Pi^{\alpha, \beta} \phi,k)}_{x | \mathcal{F}_t} = \frac{H_t^{k}(X_t)}{H_0^{k}(x)}   d \mathbb{P}^{( \phi,k)}_{x | \mathcal{F}_t}
\]
holds for all $x \in \mathbb{R}$ and $t < a_{\alpha, -\beta}^{\phi}$. Consequently, for any reals x and y, we have
\[
    \mathbb{P}^{(\Pi^{\alpha, \beta} \phi,k)}_{x} (T_y \in dt) = \frac{H_t^k(y)}{H_0^k(x)}   \mathbb{P}_{x}^{(\phi,k)}(T_y \in dt), \quad t < a_{\alpha, -\beta}^{\phi}.
\]
\end{proposition}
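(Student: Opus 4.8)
The plan is to recognise $H^{k}_{t}(X_{t})$ as a classical space-time harmonic functional of Brownian motion read through the clock $\tau\phi$, and then to run a Girsanov argument. First I would insert the explicit solution $X_{t}=\phi(t)e^{kt}(x+M_{t})$, with $M_{t}=\int_{0}^{t}\phi^{-1}(s)\,dB_{s}$, into the definition of $H^{k}_{t}$. Since $\Pi^{\alpha,\beta}\phi=\phi(\alpha+\beta\tau\phi)$, the prefactor collapses to $\sqrt{\alpha/(\alpha+\beta\tau\phi(t))}$ and the exponent to $\tfrac{\beta}{2}(x+M_{t})^{2}/(\alpha+\beta\tau\phi(t))$, so that
\[
H^{k}_{t}(X_{t})=G\big(\tau\phi(t),\,x+M_{t}\big),\qquad G(v,w)=\sqrt{\tfrac{\alpha}{\alpha+\beta v}}\,\exp\!\Big(\tfrac{\beta w^{2}}{2(\alpha+\beta v)}\Big).
\]
As $M$ is a continuous local martingale with $\langle M\rangle_{t}=\tau\phi(t)$, this is exactly the familiar Gaussian space-time harmonic function of Brownian motion evaluated along the time change $\tau\phi$.

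The martingale property then reduces to the backward heat equation $\partial_{v}G+\tfrac12\partial_{ww}G=0$, which I would confirm by a one-line differentiation. Itô's formula gives $dH^{k}_{t}(X_{t})=\partial_{w}G\big(\tau\phi(t),x+M_{t}\big)\,dM_{t}$, exhibiting $H^{k}(X)$ as a nonnegative local martingale. To promote it to a genuine martingale on $[0,a^{\phi}_{\alpha,-\beta})$, and at the same time to pin down the horizon, I would evaluate $\mathbb{E}^{(\phi,k)}_{x}[H^{k}_{t}(X_{t})]$ directly: under $\mathbb{P}^{(\phi,k)}_{x}$ the variable $x+M_{t}$ is Gaussian with mean $x$ and variance $\tau\phi(t)$, and the resulting Gaussian integral converges precisely when $\alpha+\beta\tau\phi(t)>0$, in which case it equals $\exp(\beta x^{2}/2\alpha)=H^{k}_{0}(x)$. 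A nonnegative local martingale with constant expectation is a true martingale, and the condition $\alpha+\beta\tau\phi(t)>0$, i.e. the positivity of $\Pi^{\alpha,\beta}\phi$, is exactly the description of $a^{\phi}_{\alpha,-\beta}$.

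With this density martingale I would define $\mathbb{Q}$ on $\mathcal{F}_{t}$ by $d\mathbb{Q}=(H^{k}_{t}(X_{t})/H^{k}_{0}(x))\,d\mathbb{P}^{(\phi,k)}_{x}$ and identify it through Girsanov. Since the stochastic logarithm of the density has Brownian kernel $(\partial_{w}G/G)\,\phi^{-1}$, the extra drift acquired by $X$ is
\[
e^{2kt}\,\partial_{x}\log H^{k}_{t}(X_{t})=\frac{\beta X_{t}}{\phi^{2}(t)\big(\alpha+\beta\tau\phi(t)\big)}.
\]
A short computation using $(\Pi^{\alpha,\beta}\phi)'/\Pi^{\alpha,\beta}\phi=\phi'/\phi+\beta/\big(\phi^{2}(\alpha+\beta\tau\phi)\big)$ shows this is precisely the increment converting the $(\phi,k)$-drift $(\phi'/\phi+k)X$ into the $(\Pi^{\alpha,\beta}\phi,k)$-drift; uniqueness of the strong solution (equivalently of the associated martingale problem) then yields $\mathbb{Q}=\mathbb{P}^{(\Pi^{\alpha,\beta}\phi,k)}_{x}$, which is the stated absolute-continuity relation.

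Finally, for the first-passage identity I would apply this relation at $T_{y}$. Optional stopping for the bounded time $T_{y}\wedge t$ gives $\mathbb{P}^{(\Pi^{\alpha,\beta}\phi,k)}_{x}(T_{y}\in dt)=\mathbb{E}^{(\phi,k)}_{x}[(H^{k}_{t}(X_{t})/H^{k}_{0}(x))\mathbf{1}_{\{T_{y}\in dt\}}]$, and on $\{T_{y}=t\}$ the definition of $T_{y}$ fixes the driving functional so that $H^{k}_{t}(X_{t})$ reduces to the deterministic factor $H^{k}_{t}(y)$, which can then be taken out of the expectation. The step I expect to be most delicate is this last passage from the $\mathcal{F}_{t}$-absolute continuity to the distributional identity at the stopping time: because the local martingale degenerates as $\alpha+\beta\tau\phi$ approaches $0$, one must justify uniform integrability of the density up to $T_{y}\wedge t$ throughout $[0,a^{\phi}_{\alpha,-\beta})$ before invoking optional stopping, and the explicit Gaussian moment computed above is exactly what secures this.
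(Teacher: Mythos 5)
Your proposal is correct and follows essentially the same route as the paper: exhibit $H^k_t(X_t)$ as a (local) martingale via It\^o's formula, confirm it is a true martingale through the explicit Gaussian moment computation on the interval where $\alpha+\beta\tau\phi>0$, identify the new drift via Girsanov using the identity $(\Pi^{\alpha,\beta}\phi)'/\Pi^{\alpha,\beta}\phi=\phi'/\phi+\beta/(\phi\,\Pi^{\alpha,\beta}\phi)$, and obtain the first-passage identity by optional stopping at $T_y\wedge t$ where $H$ becomes deterministic. Your repackaging of $H^k_t(X_t)$ as a space-time harmonic function $G(\tau\phi(t),x+M_t)$ of the Dambis--Dubins--Schwarz Brownian motion is only a cosmetic variant of the paper's stochastic-exponential form, and your explicit value $\mathbb{E}[H^k_t(X_t)]=H^k_0(x)$ is in fact the careful version of the paper's expectation computation for general starting point $x$.
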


\begin{proof}
 From the It\^{o} formula, we have
\[
\frac{\beta}{2} \frac{X_t^2 e^{-2kt}}{\phi(t) \Pi^{\alpha, \beta} \phi(t)} = \beta \int_{0}^{t} \frac{X_s e^{-ks}}{\phi(s)\Pi^{\alpha, \beta} \phi(s)} dB_s - \frac{\beta^2}{2} \int_{0}^{t} \left(\frac{X_s e^{-ks}}{\phi(s)\Pi^{\alpha, \beta} \phi(s)}\right)^2 ds + \frac{1}{2} \ln \left(\frac{\alpha+ \beta \tau \phi(t)}{\alpha}\right).
\]
Now, as $\mathbb{E} [e^{-\frac{\lambda}{2} B_t^2}] = (1 +\lambda t)^{-1/2}, \; \lambda > -1/t $, we deduce that, for all $t<a_{\alpha,-\beta}^{\phi}$, we have
\[
\mathbb{E}[H_{t}^{k}(X_t)] = \mathbb{E}\left[\sqrt{\frac{\alpha}{\alpha + \beta \tau \phi}} e^{-\frac{\beta}{2} \frac{W_{\tau \phi}^2}{\alpha + \beta \tau \phi}}\right] = 1
\] 
Hence, it is a true martingale. Next, notice that
\[
d \left< \beta \int_{0}^{.} \frac{X_s e^{-ks}}{\phi(s)\Pi^{\alpha, \beta} \phi(s)} dB_s, X_.   \right>_{t} = \beta \frac{X_t }{\phi(t)\Pi^{\alpha, \beta} \phi(t)} dt,
\]
where $\left<.,.\right>$ is the quadratic variation process. Also,
\[
\frac{(\Pi^{\alpha, \beta} \phi(t))'}{\Pi^{\alpha, \beta} \phi(t)} = \frac{\phi'(t)}{\phi(t)} + \frac{\beta}{\phi(t)\Pi^{\alpha, \beta} \phi(t)},
\]
and hence the absolute continuity relationship follows by an application of Girsanov’s theorem. Now, Doob's optional stopping theorem implies that

\[
\begin{aligned}
       \mathbb{P}^{(\Pi^{\alpha, \beta} \phi,k)}_{x} (T_y \leq t) & = \mathbb{E}_{x}^{(\phi,k)} \left[\mathbbm{1}_{\{T_y \leq t\}} \frac{H_t^{k}(X_t)}{H_0^{k}(x)}\right] 
         = \mathbb{E}_{x}^{(\phi,k)} \left[\mathbbm{1}_{\{T_y \leq t\}} \mathbb{E}_{x}^{(\phi,k)} \left[ \frac{H_t^{k}(X_t)}{H_0^{k}(x)} \bigg| \mathcal{F}_{t \wedge T_y}\right]\right] \\
       & = \mathbb{E}_{x}^{(\phi,k)} \left[\mathbbm{1}_{\{T_y \leq t\}} \frac{H_{T_y}^{k}(y)}{H_0^{k}(x)}\right],
\end{aligned}
\]
where $\mathbbm{1}_A$ is the indicator function of the set $A$. The result follows by differentiation. 
\end{proof}
Now, we are ready to give the second proof of Theorem \ref{theorem OU transformation}.

\begin{proof}[Second proof of Theorem \ref{theorem OU transformation}]
Let $\phi = \Sigma_{k}f$, $f_k^{\alpha,\beta} = S_{k}^{\alpha,\beta}f$ and thus, by definition, $f_k^{\alpha,\beta} = \Sigma_{k} \circ \Pi^{\alpha, -\beta} \phi$. Since $\Sigma_{k}$ is an involution, from Lemma \ref{lemma stopping time relations} we get $T_{k}^{f_k^{\alpha,\beta}} = s(\tau \circ \Pi^{\alpha, -\beta} \phi(T_1))$ a.s. Using identity (2) from Proposition \ref{prop sigma hat properties}, we get 
\[
    r(\rho \circ \tau \circ \Pi^{\alpha, -\beta} \phi(r(t))) = r(\rho \circ \tau \circ \Sigma_{k}f_k^{\alpha,\beta} (r(t)) = \tau f_k^{\alpha,\beta}(t).
\]
Now, for $t < \zeta_{k, \alpha, \beta}$, writing $p_k^{f_k^{\alpha,\beta}}(t)  dt = \mathbb{P}\Big( T_{k}^{  f_k^{\alpha,\beta}} \in dt  \Big)$ and $\Tilde{p}^{(\phi , k)}(t)  dt =  \mathbb{P}^{(\phi,k)}_{0} \left(T_1 \in dt \right)$, we get
\[
\begin{aligned}
p_k^{f_k^{\alpha,\beta}}(t)  & = \frac{1}{(2k (\tau f_k^{\alpha,\beta} )+1)(f_k^{\alpha,\beta})^2}  \Tilde{p}^{(\Pi^{\alpha, -\beta} \phi , k)} \left( s(\tau f_k^{\alpha,\beta} (t)) \right) \\
&  = \frac{1}{(2k (\tau f_k^{\alpha,\beta} )+1)(f_k^{\alpha,\beta})^2}   \Tilde{p}^{(\Pi^{\alpha, -\beta} \phi , k)} \left( s \left(\tau f \left(s\left( \frac{ \alpha^2 r(t)}{1 + \alpha \beta r(t)} \right) \right) \right) \right),
\end{aligned}
\]
where we used the identity $\tau f_k^{\alpha,\beta}(t) = \tau f \left(s\left( \frac{ \alpha^2 r(t)}{1 + \alpha \beta r(t)} \right) \right)$ which follows readily by a change of variable.
Now, using Proposition \ref{Prop gauss absolute cont measure} with the identities
\[
\begin{aligned}
  \left(\phi \left( s \circ \tau f \left(s\left( \frac{ \alpha^2 r(t)}{1 + \alpha \beta r(t)} \right)  \right) \right) \right)^{-1} & = \left(\phi \left(\rho \circ \tau \phi \left(r \circ  s\left( \frac{ \alpha^2 r(t)}{1 + \alpha \beta r(t)} \right)  \right) \right)\right)^{-1}\\
      & = e^{k \rho \circ \tau \Sigma_{k} f \left( \frac{ \alpha^2 r(t)}{1 + \alpha \beta r(t)} \right) + k s \left( \frac{ \alpha^2 r(t)}{1 + \alpha \beta r(t)} \right)}  f \left(s \left( \frac{ \alpha^2 r(t)}{1 + \alpha \beta r(t)} \right) \right)\\
      & =  e^{k s (\tau f_k^{\alpha, \beta}(t)) + k s \left( \frac{ \alpha^2 r(t)}{1 + \alpha \beta r(t)} \right)}  f \left(s \left( \frac{ \alpha^2 r(t)}{1 + \alpha \beta r(t)} \right) \right) ,
\end{aligned}
\]
and
\[
\begin{aligned}
     (\Pi^{\alpha, - \beta} \phi( s (\tau f_k^{\alpha, \beta}(t))))^{-1} & = (\Pi^{\alpha, - \beta} \phi( \rho \circ \tau \circ \Pi^{\alpha, -\beta} \phi (r(t)) ))^{-1} = e^{k \rho \circ \tau  \Pi^{\alpha, -\beta} \phi (r(t)) + kt} f_k^{\alpha, \beta} (t)\\
     & = e^{k s (\tau f_k^{\alpha, \beta}(t)) + kt} f_k^{\alpha, \beta} (t),
\end{aligned}
\]
yields, for any $t < \zeta_{k, \alpha, \beta}$,
\[
\begin{aligned}
 & \Tilde{p}^{(\Pi^{\alpha, -\beta} \phi , k)} \left(  s \left(\tau f \left(s\left( \frac{ \alpha^2 r(t)}{1 + \alpha \beta r(t)} \right) \right) \right) \right)  = \\
& \sqrt{1+ \alpha \beta r(t)}   e^{- \frac{\alpha \beta }{2(1 + \alpha \beta r(t))} (f_k^{\alpha,\beta}(t))^2 \; e^{2kt}} \Tilde{p}^{(\phi , k)} \left( s \left(\tau f \left(s\left( \frac{ \alpha^2 r(t)}{1 + \alpha \beta r(t)} \right) \right) \right) \right).
\end{aligned}
\]
Hence,
\[
p_k^{f_k^{\alpha,\beta}}(t) = \frac{e^{- \frac{\alpha \beta }{2(1 + \alpha \beta r(t))} (f_k^{\alpha,\beta}(t))^2 \; e^{2kt}}\sqrt{1+ \alpha \beta r(t)}}{(2k (\tau f_k^{\alpha,\beta} )+1)(f_k^{\alpha,\beta})^2}  \Tilde{p}^{(\phi , k)} \left(  s \left(\tau f \left(s\left( \frac{ \alpha^2 r(t)}{1 + \alpha \beta r(t)} \right) \right) \right) \right).
\]
Using Lemma \ref{lemma stopping time relations} again, we finally get (\ref{FPT dist relation}).
\end{proof}

\subsection{Third proof of Theorem \ref{theorem OU transformation} via the Lie group symmetries} \label{section lie proof}

We now provide the last proof for Theorem \ref{theorem OU transformation} using the Lie group symmetries approach. For Lie Group theory, we refer to {\cite{ Bluman book,Peter olver lie}}. In general, this technique can be used to find solutions to differential equations with new boundary conditions from known ones. For example, the Lie point symmetries of the heat equation
\[
     \frac{\partial h}{\partial t} = \frac{1}{2} \frac{\partial^2 h}{\partial x^2}
\]
can be found in Section 3.3 of \cite{Alili patie ode}. Before going through the proof, we first discuss the connections between the below boundary value problems (\ref{bvps}) corresponding to the heat equation and the OU Fokker-Planck equation. 
Set
\[
D^f = \{(x,t) \in \mathbb{R \times \mathbb{R}^+} : x \leq f(t)\}.
\] 
We introduce the following boundary value problem, 
\begin{equation}\label{bvps}
    \mathcal{H}_k(f) : = 
\begin{cases}
  \frac{\partial}{\partial t}h(x,t)=  k \frac{\partial}{\partial x}(x h(x,t)) +\frac{1}{2}\frac{\partial^2}{\partial x^2}h(x,t) \quad \text{on}\; D^f;\\      
  h(f(t),t)=0 \quad \text{for all} \; t>0; \\
  h(.,0) = \delta_0(.) \quad \text{on} \; (-\infty,f(0)).
  \end{cases}
\end{equation}
Note that the first equation in $\mathcal{H}_k(f)$ is the OU Fokker-Planck equation \eqref{OU fokker planck equation}. As $k \to 0$, we recover the heat equation and obtain the corresponding boundary value problem for the BM $\mathcal{H}(f)$. By Proposition 5.4.3.1 of \cite{Yor blanc method for finance 2009}, solutions to $\mathcal{H}(f)$ and $\mathcal{H}_k(f)$ admit the following probabilistic representations:
\begin{equation} \label{OU BM probability solution representation}
      h(x,t)dx = \mathbb{P}(W_t \in dx,\; t<T^f)  \; \; \; \text{and} \; \; \; h_k(x,t)dx = \mathbb{P}(U_t \in dx,\; t<T^{f}_k).
\end{equation}
Using (\ref{OU brownian motion relation}) and (\ref{OU brownian stopping time relation}), we connect the two solutions directly in the following way:
\[
\mathbb{P}(W_t < x, \; t< T^{\Lambda_{k} f})  = \mathbb{P}(e^{-ks(t)} W_{t} < e^{-ks(t)} x , \; s(t) < s(T^{\Lambda_{k} f})) = \mathbb{P}(U_{s(t)} < e^{-ks(t)}x, \; s(t) < T^{f}_k)
\]
and so
\begin{equation} \label{OU BM solution relation}
\begin{aligned}
\; \; & h(x,t) = e^{-ks(t)} h_k(e^{-ks(t)}x,s(t)) 
    \; \; \; \text{and} \; \; \;
  h_k(x,t) & = e^{kt} h(e^{kt}x,r(t)),
\end{aligned}
\end{equation}
where $h$ now denotes a solution to $\mathcal{H}(\Lambda_{k} f)$. This shows that the solutions to the Fokker-Plank equation of the OU on $D^f$ are directly connected to the solutions to the heat equation on $D^{\Lambda_{k} f}$. As will become clear in the third proof of Theorem \ref{theorem OU transformation} at the end of this section, the aim is to find a solution to the Fokker-Plank equation of the OU such that it vanishes on our desired transformed boundary $S_{k}^{\alpha,\beta}f$. To do this, we first present a proposition for the OU which resembles Proposition 3.5 in \cite{Alili patie ode} for the Brownian motion. 

\begin{proposition} \label{proposition h hat alpha beta expression}
   Let $h_k$ be the solution to the boundary value problem $\mathcal{H}_k(f)$ in \eqref{bvps}. Then, for any $\alpha>0$, $\beta \in \mathbb{R}$, the following mapping
  \begin{equation} \label{h hat alpha beta equation}
      h^{\alpha,\beta}_k(x,t) = \frac{\alpha e^{kt}}{\sqrt{1+\alpha \beta r(t)}} e^{- \frac{\alpha \beta e^{2kt} x^2}{2(1+ \alpha \beta r(t))} - k s\left(\frac{\alpha^2 r(t)}{1+\alpha \beta r(t)}\right)} h_{k}\left(\frac{\alpha e^{-k s\left(\frac{\alpha^2 r(t)}{1+\alpha \beta r(t)}\right)+kt}}{1+\alpha \beta r(t)} x, s\left(\frac{\alpha^2 r(t)}{1+\alpha \beta r(t)} \right)  \right),
  \end{equation}
  for $t < \zeta_{k, \alpha, \beta}$, is the solution to the boundary value problem $\mathcal{H}_k(S_{k}^{\alpha,\beta}f)$. 
\end{proposition}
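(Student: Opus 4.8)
The plan is to obtain the OU statement by transporting its Brownian analogue, Proposition~3.5 of \cite{Alili patie ode}, through the explicit correspondence \eqref{OU BM solution relation} between solutions of $\mathcal{H}_k(\cdot)$ and $\mathcal{H}(\Lambda_k\cdot)$ established just above, together with the conjugacy $S_k^{\alpha,\beta}=\Lambda_k^{-1}\circ S^{\alpha,\beta}\circ\Lambda_k$ proved in Lemma~\ref{lemma nonlinear ODE}. The Brownian result supplies, for any solution $h$ of $\mathcal{H}(g)$, an explicit Lie point symmetry
\[
h^{\alpha,\beta}(x,t)=\frac{\alpha}{\sqrt{1+\alpha\beta t}}\,\exp\!\left(-\frac{\alpha\beta x^{2}}{2(1+\alpha\beta t)}\right)h\!\left(\frac{\alpha x}{1+\alpha\beta t},\,\frac{\alpha^{2}t}{1+\alpha\beta t}\right)
\]
that solves $\mathcal{H}(S^{\alpha,\beta}g)$; note that the $k\to 0$ limit of \eqref{h hat alpha beta equation} collapses to exactly this map, which is a useful sanity check on the normalising constants.

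First I would put $g=\Lambda_k f$ and use the left identity of \eqref{OU BM solution relation} to pass from the given $h_k$ solving $\mathcal{H}_k(f)$ to the heat solution $h(x,t)=e^{-ks(t)}h_k(e^{-ks(t)}x,s(t))$ of $\mathcal{H}(\Lambda_k f)$. Applying the Brownian symmetry then yields $h^{\alpha,\beta}$ solving $\mathcal{H}(S^{\alpha,\beta}\Lambda_k f)$, and by the conjugacy $S^{\alpha,\beta}\Lambda_k f=\Lambda_k S_k^{\alpha,\beta}f$ this boundary value problem is $\mathcal{H}(\Lambda_k(S_k^{\alpha,\beta}f))$. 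Finally I would convert back with the right identity of \eqref{OU BM solution relation}, forming $e^{kt}h^{\alpha,\beta}(e^{kt}x,r(t))$, which by the same correspondence solves $\mathcal{H}_k(S_k^{\alpha,\beta}f)$. The Fokker--Planck equation, the vanishing on the transformed boundary, and the $\delta_0$ initial condition are all inherited automatically along this chain, so the only thing left to establish is that the resulting expression coincides with \eqref{h hat alpha beta equation}.

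That reconciliation is the step I expect to be the main obstacle: it is elementary but delicate bookkeeping. Substituting $t\mapsto r(t)$ in the Brownian symmetry and composing the prefactors $e^{\pm ks(\cdot)}$ and $e^{\pm kt}$, I would repeatedly use $r\circ s=s\circ r=\mathrm{id}$ and the identity $e^{ks(u)}=(2ku+1)^{1/2}$ with $u=\alpha^{2}r(t)/(1+\alpha\beta r(t))$ to recover the prefactor $\alpha e^{kt}/\sqrt{1+\alpha\beta r(t)}$, the Gaussian weight, and the rescaled space--time arguments of $h_k$ displayed in \eqref{h hat alpha beta equation}.

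As an independent check on the boundary condition --- and a route that stays entirely within the Lie framework should one prefer to bypass the Brownian transfer --- I would set $x=S_k^{\alpha,\beta}f(t)$ directly in \eqref{h hat alpha beta equation}: using \eqref{S hat transformation} and $e^{-ks(u)}=(2ku+1)^{-1/2}$, the spatial argument of $h_k$ simplifies precisely to $f\!\left(s\!\left(\alpha^{2}r(t)/(1+\alpha\beta r(t))\right)\right)$, whence $h_k^{\alpha,\beta}$ vanishes there by the boundary condition of $\mathcal{H}_k(f)$. In this self-contained variant the main labour shifts to verifying that \eqref{h hat alpha beta equation} satisfies the Fokker--Planck equation \eqref{OU fokker planck equation} by direct substitution, i.e. confirming the chain-rule computation that the map is a genuine point symmetry, after which the same initial-condition and boundary checks close the argument.
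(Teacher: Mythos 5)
Your proposal follows essentially the same route as the paper: transport Proposition 3.5 of Alili--Patie through the correspondence \eqref{OU BM solution relation} and the conjugacy $S_k^{\alpha,\beta}=\Lambda_k^{-1}\circ S^{\alpha,\beta}\circ\Lambda_k$, then reconcile the resulting expression with \eqref{h hat alpha beta equation} and note the boundary and initial conditions carry over. The only divergence is that you declare the $\delta_0$ initial condition ``inherited automatically,'' whereas the paper checks it explicitly (writing $h_k^{\alpha,\beta}(x,0)=e^{-\alpha\beta x^2/2}h_k^{\alpha,0}(x,0)$ and identifying $h_k^{\alpha,0}$ with a dilation symmetry of the heat equation, with the initial domain becoming $(-\infty,f(0)/\alpha)$); since the conversion map reduces to the identity at $t=0$, your shortcut is harmless.
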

\begin{proof}
Firstly, if $f$ is an infinitely continuously differentiable function, then so is its transformation $S_{k}^{\alpha,\beta}f$.  From Proposition 3.5 in \cite{Alili patie ode}, the following function
\[
    h^{\alpha, \beta} (x,t) = \frac{\alpha}{\sqrt{1+ \alpha \beta t}} e^{- \frac{\alpha \beta x^2}{2(1+ \alpha \beta t)}} h\left(\frac{\alpha x}{1 + \alpha \beta t}, \frac{\alpha^2 t}{1 + \alpha \beta t} \right),
\]
is a solution to $\mathcal{H}(S^{\alpha,\beta} \Lambda_{k} f)$ whenever $h$ solves $\mathcal{H}(\Lambda_{k} f)$. Now, using relation (\ref{OU BM solution relation}), we define $\Tilde{h}^{\alpha,\beta}$ as $\Tilde{h}^{\alpha,\beta}= e^{kt} h^{\alpha,\beta}(e^{kt}x, r(t)) $. Then, this expression is a solution to the boundary value problem $\mathcal{H}_k(S_{k}^{\alpha,\beta}f)$. Using relation (\ref{OU BM solution relation}) again, we can write
\[
h\left(\frac{\alpha e^{kt} x}{1+\alpha \beta r(t)}, \frac{\alpha^2 r(t)}{1+\alpha \beta r(t)}\right) = e^{ -ks\left(\frac{\alpha^2 r(t)}{1+\alpha \beta r(t)}\right)} h_k\left(\frac{\alpha e^{-k s\left(\frac{\alpha^2 r(t)}{1+\alpha \beta r(t)}\right)+kt}}{1+\alpha \beta r(t)} x, s\left(\frac{\alpha^2 r(t)}{1+\alpha \beta r(t)}\right)  \right),
\]
and so we get that $\Tilde{h}^{\alpha,\beta}$ and $h_{k}^{\alpha,\beta}$ coincide, which implies that $h_{k}^{\alpha,\beta}$ is indeed a solution to $\mathcal{H}_k(S_{k}^{\alpha,\beta}f)$. Moreover, we have that
$h_{k}^{\alpha,\beta}(x,t) = 0  \iff x = S_{k}^{\alpha, \beta}f(t)$, because by assumption $h_{k}$ is a solution to $\mathcal{H}_k(f)$, hence $h_k(f(.),.)=0$. Also, $h_{k}^{\alpha,\beta}(x,0) =  e^{-\frac{\alpha \beta x^2}{2}}h_{k}^{\alpha,0}(x,0)$. Now, let us investigate $h_{k}^{\alpha,0}$. Using relation (\ref{OU BM solution relation}) again, we get
\[
\begin{aligned}
h_{k}^{\alpha,0}(x,t) & = \alpha e^{kt -ks(\alpha^2 r(t))} h_{k}(\alpha e^{-ks(\alpha^2 r(t))+kt}x, s(\alpha^2 r(t))) \\
& = \alpha  e^{kt -ks(e^{-2(-\ln(\alpha))} r(t))} h_{k}(e^{-ks(e^{-2(-\ln(\alpha))} r(t))+kt - (-\ln(\alpha))}x, s(e^{-2(-\ln(\alpha))} r(t))) \\
& = e^{kt} h(e^{-\epsilon} e^{kt} x, e^{-2\epsilon} r(t)) 
 = e^{kt} h^{(4)}_{\epsilon}(e^{kt}x, r(t)),
\end{aligned}
\]
where $\epsilon=-\ln(\alpha)$. Now,  the latter $h^{(4)}_{\epsilon}$ is the fourth symmetry of the heat equation listed in Section 3.3 of \cite{Alili patie ode}, meaning that it satisfies the heat equation. By using relation (\ref{OU BM solution relation}), we see that $e^{kt} h^{(4)}_{\epsilon}(e^{kt}x, r(t))$ indeed satisfies the Fokker-Plank equation of the OU and so $h_{k}^{\alpha,0}$ is a solution to $\mathcal{H}_{k}(S_{k}^{\alpha,0}f)$ and thus in particular $h_{k}^{\alpha,0}(.,0) = \delta_{0}(.)$ on $(-\infty, \frac{f(0)}{\alpha})$. Hence, we get
\[
h_{k}^{\alpha,\beta}(x,0) = e^{-\frac{\alpha \beta x^2}{2}}h_{k}^{\alpha,0}(x,0)= \delta_{0}(x),
\]
which concludes the proof. 
\end{proof}

\subsubsection{Lie symmetries of OU Fokker-Planck equation}

{In this section, we provide a direct construction of the function $h_{k}^{\alpha, \beta}$ given in (\ref{h hat alpha beta equation}) from the Lie-symmetries of the OU Fokker Planck equation (\ref{OU fokker planck equation}).} Using similar techniques discussed in Section 2.4 in \cite{Peter olver lie}, after some lengthy and tedious calculations, one finds that the Lie algebra of infinitesimal symmetries of the OU Fokker-Planck equation is spanned by six vector fields, where $x, t$ are the
two independent variables and $h$ is the dependent variable,
$$
\mathbf{v_1} = e^{-kt}  \frac{\partial}{\partial x}, \quad \mathbf{v_2} =  \frac{\partial}{\partial t}, \quad \mathbf{v_3} = h  \frac{\partial }{\partial h}, \quad
\mathbf{v_4} = e^{kt}  \frac{\partial }{\partial x} - 2kxh e^{kt} \frac{\partial }{\partial h}
$$
$$
\mathbf{v_5} = -kx e^{-2kt}\frac{\partial }{\partial x} + e^{-2kt}\frac{\partial }{\partial t}+ kh e^{-2kt} \frac{\partial }{\partial h}, \quad
\mathbf{v_6} = kx e^{2kt}\frac{\partial }{\partial x} + e^{2kt}\frac{\partial }{\partial t} - 2k^2 x^2 h e^{2kt} \frac{\partial }{\partial h}
$$
\hfill
\\
and by the infinite-dimensional sub-algebra $\mathbf{v_\alpha} = u(x,t)\frac{\partial }{\partial h}$, where $u(x,t)$ is an arbitrary solution of the OU Fokker-Planck equation. Now, by exponentiating the basis, as in page 89 of \cite{Peter olver lie}, we can produce the one-parameter group of transformations leaving invariant $\mathcal{H}_{k}(f)$. Doing this procedure for each of the vectors fields, we obtain the one-parameter groups $G_i$ generated by the $\mathbf{v_i}$. Since each group $G_i$ is a symmetry group, if $h$ is a solution to (\ref{OU fokker planck equation}), then the following are also solutions to (\ref{OU fokker planck equation}):

\begin{equation} \label{symmetry OU}
\begin{aligned}
  h^{(1)}_{k,\epsilon}(x,t) =  &   h(x- \epsilon e^{-kt}, t )\\
    h^{(2)}_{k,\epsilon}(x,t) =  &   h(x, t- \epsilon )\\
   h^{(3)}_{k,\epsilon}(x,t) =  &   e^{\epsilon} h(x, t )\\
      h^{(4)}_{k,\epsilon}(x,t) =  &   e^{-2kx \epsilon e^{kt}+k \epsilon^2 e^{2kt}}h(x- \epsilon e^{kt}, t )\\
          h^{(5)}_{k,\epsilon}(x,t) =  &   \frac{e^{kt}}{\sqrt{e^{2kt} - 2k\epsilon}} h\bigg(\frac{x e^{kt}}{\sqrt{e^{2kt} - 2k\epsilon}}, \frac{\ln{(e^{2kt}-2k\epsilon)}}{2k} \bigg)\\
          h^{(6)}_{k,\epsilon}(x,t) =  &   e^{\frac{-2k^2 x^2 \epsilon}{e^{-2kt} + 2k\epsilon}} h\bigg(\frac{x e^{-kt}}{\sqrt{e^{-2kt} + 2k\epsilon}}, \frac{\ln{(e^{-2kt}+2k\epsilon)}}{-2k} \bigg).\\
\end{aligned}
\end{equation}
{Notice that these symmetry groups provide an explanation of the invariance of the law of the OU process under some specific transformations. More precisely, $  h^{(1)}_{k,\epsilon}$ and $  h^{(2)}_{k,\epsilon}$ show the space and time invariance  of the law of the OU, respectively, $ h^{(3)}_{k,\epsilon}$ is the trivial multiplication by a constant, $ h^{(4)}_{k,\epsilon}$ represents the Girsanov transform connecting the law of the OU with different exponential drifts, while compositions of $ h^{(5)}_{k,\epsilon}$ and $ h^{(6)}_{k,\epsilon}$ are deeply connected to the change of measure connecting the law of the OU with its bridges.}

In Proposition \ref{proposition h hat alpha beta expression}, we proved that the function $h_{k}^{\alpha,\beta}$ is a solution to the boundary value problem $\mathcal{H}_{k}(S_{k}^{\alpha,\beta}f)$ by directly using the relation between the two boundary value problems in (\ref{bvps}). We can also construct this function directly using the symmetries above. 

\begin{remark} \label{remark symmetry error}

 Proposition 5 of \cite{Dmitry Lie} gives a general class of symmetries for SDEs with a drift term specified in Proposition 4 therein. The expression therein contains  misprints in $P^{\epsilon}_{1,2}$ and $\mathcal{T}_{p_{1,2}}$. Using the notation in the original article, the correct expressions are 
\begin{eqnarray*}
    P^{\epsilon}_{1,2}  & : & \frac{u(x,t)}{\mathcal{U}(\mathcal{X},\mathcal{T})} = \frac{\theta_{\mathcal{F}_2}(x)}{\theta_{\mathcal{F}_2}(\mathcal{X})}  \frac{e^{\pm 2 \sqrt{A} \left(\nu - \frac{1}{4} \pm \frac{1}{4} \right)t \mp \frac{\sqrt{A}}{2} (x+ 2B/A)^2 }}{e^{\pm 2 \sqrt{A} \left(\nu - \frac{1}{4} \pm \frac{1}{4} \right)\mathcal{T} \mp \frac{\sqrt{A}}{2} (\mathcal{X}+ 2B/A)^2}},\\
    \mathcal{T}_{p_{1,2}} & = & \mp \ln (e^{\mp 2\sqrt{A}t} + \epsilon)/(2\sqrt{A}).
\end{eqnarray*}
If we set $A= k^2, B=0, \nu=0$ and $\theta_{\mathcal{F}_2}(x)= e^{-kx^2/2}$, we recover the above symmetries (\ref{symmetry OU}) for the OU process.


\end{remark}

\begin{lemma} \label{lemma symmetry h alpha beta}
\[
    h_{k}^{\alpha,\beta}=    \left( h^{(2)}_{k,\left(\frac{-\ln(\alpha)}{k}\right)} \circ h^{(5)}_{k,\left(\frac{\alpha^2 - 1}{2k}\right)} \right) \circ \left(h^{(6)}_{k, \left(\frac{\beta}{2k(2k\alpha - \beta)}\right)} \circ h^{(2)}_{k, \left(\frac{1}{k} \ln{\left(\frac{2k\alpha - \beta}{2k \alpha}\right)}\right)} \circ h^{(5)}_{k, \left(\frac{\beta}{2k(2k\alpha-\beta)}\right)} \right).
\]
\end{lemma}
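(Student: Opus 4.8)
The plan is to treat each $h^{(i)}_{k,\epsilon}$ in \eqref{symmetry OU} as an operator acting on a solution of the OU Fokker--Planck equation, and to evaluate the composite operator on the right-hand side at an arbitrary point $(x,t)$ by pushing $(x,t)$ through the chain from the outer factor $A=h^{(2)}_{k,\gamma}\circ h^{(5)}_{k,\eta}$ (applied first to the argument) inward through $B=h^{(6)}_{k,\epsilon}\circ h^{(2)}_{k,\delta}\circ h^{(5)}_{k,\epsilon}$, recording at each stage (i) the time at which the inner solution is evaluated, (ii) the coefficient multiplying $x$, and (iii) the scalar prefactor. Since the target is the \emph{explicit} expression \eqref{h hat alpha beta equation}, it then suffices to check that these three data agree, where $\gamma=-\ln\alpha/k$, $\eta=(\alpha^2-1)/(2k)$, $\epsilon=\beta/(2k(2k\alpha-\beta))$, and $\delta=\tfrac1k\ln\frac{2k\alpha-\beta}{2k\alpha}$.

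The key simplification I would use is the coordinate $v:=e^{2kt}=2kr(t)+1$, in which the three symmetries act as elementary maps: $h^{(5)}_{k,\epsilon}$ evaluates the inner solution at the time with $v\mapsto v-2k\epsilon$ (i.e. $r\mapsto r-\epsilon$) and carries coefficient $\sqrt{v_{\mathrm{old}}/v_{\mathrm{new}}}$ in both the spatial variable and the prefactor; $h^{(2)}_{k,\epsilon}$ acts by the scaling $v\mapsto e^{-2k\epsilon}v$ with trivial spatial coefficient and prefactor $1$; and $h^{(6)}_{k,\epsilon}$ acts by $v\mapsto v/(1+2k\epsilon v)$, with spatial coefficient $\sqrt{v_{\mathrm{new}}/v_{\mathrm{old}}}$ and the Gaussian prefactor $\exp(-2k^2x^2\epsilon\,v_{\mathrm{new}})$. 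Substituting the stated parameters, the successive $v$-values telescope cleanly: applying the outer pair $A$ to the evaluation point first produces the pure dilation $r\mapsto\alpha^2 r$, and the inner block $B$ then contributes the M\"obius factor, giving the net time map $r\mapsto \alpha^2 r/(1+\alpha\beta r)$. Hence the inner solution $h_k$ is evaluated at time $s\bigl(\alpha^2 r(t)/(1+\alpha\beta r(t))\bigr)$, exactly as in \eqref{h hat alpha beta equation}.

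With the time map fixed, I would collect the spatial coefficient as the telescoping product of the individual $\sqrt{v_{\mathrm{old}}/v_{\mathrm{new}}}$ and $\sqrt{v_{\mathrm{new}}/v_{\mathrm{old}}}$ factors and simplify, using $r\circ s=\mathrm{id}$ and $e^{2ks(u)}=2ku+1$, to the coefficient $\alpha e^{kt}/\bigl((1+\alpha\beta r(t))\,v_5^{1/2}\bigr)$ of \eqref{h hat alpha beta equation}, where $v_5=e^{2ks(\alpha^2r/(1+\alpha\beta r))}$. For the Gaussian I would carry the spatial coefficient accumulated up to the $h^{(6)}$ stage into its $x^2$-exponent; a direct substitution shows that the chosen $\epsilon$ collapses this exponent to $-\alpha\beta e^{2kt}x^2/\bigl(2(1+\alpha\beta r(t))\bigr)$, matching \eqref{h hat alpha beta equation}. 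As an independent sanity check I would verify the limiting cases $\beta\to0$ (where $B$ reduces to the identity and $A$ alone must reproduce $h^{\alpha,0}_k$, consistent with the $h^{(4)}$ computation in the proof of Proposition \ref{proposition h hat alpha beta expression}) and $\alpha\to1$ (where, under the heat-equation correspondence \eqref{OU BM solution relation}, the composite should coincide with the projective symmetry associated with $h^{1,\beta}_k$).

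I expect the scalar normalization to be the main obstacle. The time argument, the spatial argument, and the $x^2$-coefficient of the Gaussian all fall out transparently from the $v$-coordinate bookkeeping, but the $x$-independent power-of-$v$ prefactor receives contributions \emph{only} from the two $h^{(5)}$ factors (as $h^{(6)}$ contributes a purely Gaussian prefactor), and these must be carefully separated from the spatial coefficient and recombined. This is the step where one must check the overall multiplicative constant most scrupulously; should a residual constant depending on $\alpha,\beta$ survive, it can be reconciled using the trivial scaling symmetry $h^{(3)}_{k,\epsilon}$, which multiplies any solution by $e^{\epsilon}$ without altering the vanishing set, so that the identification of $S^{\alpha,\beta}_k f$ as the boundary on which the composite vanishes is in any case preserved.
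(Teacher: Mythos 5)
Your plan coincides with what the paper does: the paper offers no written proof of Lemma \ref{lemma symmetry h alpha beta} beyond the remark that the calculation is tedious but routine, and the verification it has in mind is exactly the bookkeeping you describe --- push $(x,t)$ through the chain of symmetries \eqref{symmetry OU} from the outermost factor inward and match the time argument, the spatial argument, the Gaussian exponent and the scalar prefactor against \eqref{h hat alpha beta equation}. Your reading of the three symmetries in the coordinate $v=e^{2kt}=2kr(t)+1$ is correct, and the telescoping you predict does occur: the composite time map is $r\mapsto \alpha^2 r/(1+\alpha\beta r)$, the spatial coefficient comes out as $\alpha e^{kt-ks(\cdot)}x/(1+\alpha\beta r(t))$, and the choice of $\epsilon$ in $h^{(6)}$ collapses the Gaussian exponent to $-\alpha\beta e^{2kt}x^2/\bigl(2(1+\alpha\beta r(t))\bigr)$. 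Your caution about the scalar normalization is also warranted: collecting the two $h^{(5)}$ prefactors, the right-hand side of the lemma actually evaluates to $\sqrt{2k\alpha/(2k\alpha-\beta)}\,h_{k}^{\alpha,\beta}$ rather than $h_{k}^{\alpha,\beta}$ (for instance, with $k=1/2$, $\alpha=1$, $\beta=-1$ the outer pair is the identity and the single surviving $h^{(5)}$ prefactor is $\sqrt{(r+1)/2}$ against the target $\sqrt{r+1}$), so the identity as printed holds only up to this multiplicative constant and should be repaired exactly as you propose, by inserting the trivial symmetry $h^{(3)}_{k,\epsilon}$ with $\epsilon=\tfrac{1}{2}\ln\frac{2k\alpha-\beta}{2k\alpha}$; since this does not alter the vanishing set, the use of the lemma in the third proof of Theorem \ref{theorem OU transformation} is unaffected.
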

\hfill
\\
Although the calculation is tedious, it can  be easily shown that the expression given in Lemma \ref{lemma symmetry h alpha beta} holds. {Now, because the function $h_{k}^{\alpha,\beta}$ is a composition of symmetries of the OU Fokker-Planck equation, then it is itself a solution to the OU Fokker-Planck equation. Lemma \ref{lemma symmetry h alpha beta} can then be used to shorten the first part of the proof given in Proposition \ref{proposition h hat alpha beta expression}. Now, we give the third proof of Theorem \ref{theorem OU transformation}. }

\begin{proof}[Third proof of Theorem \ref{theorem OU transformation}]
{
Let $h_k$ be the solution to $\mathcal{H}_{k}(f)$. Then, by \eqref{OU BM probability solution representation}, we have $\mathbb{P}(T_{k}^f \leq t) = 1 - \int_{-\infty}^{f(t)} h_{k}(x,t) dx$. Setting $p_k^{f}(t)  dt = \mathbb{P}\Big( T_{k}^{f} \in dt \Big) $, we get
\[p_k^{f}(t) = - \frac{d}{dt} \left(\int_{-\infty}^{f(t)} h_{k}(x,t) dx \right) = -h_{k}(f(t),t) f'(t) - \int_{-\infty}^{f(t)} \frac{\partial }{\partial t} h_{k}(x,t) dx = - \int_{-\infty}^{f(t)}  \frac{\partial }{\partial t} h_{k}(x,t) dx,\]
\\
because $h_{k}$ vanishes on the boundary $f$. Using the OU Fokker-Planck equation \eqref{OU fokker planck equation}, we get
\begin{equation}\label{images density diff relation}
    p_k^{f}(t) = - \int_{-\infty}^{f(t)} - \frac{\partial }{\partial x} [-kx h_{k}(x,t)] + \frac{1}{2} \frac{\partial^2 }{\partial x^2} h_{k}(x,t) \; dx =  -\frac{1}{2}  \frac{\partial }{\partial x} h_{k}(x,t)|_{x=f(t)}.
\end{equation}
Now, by Proposition \ref{proposition h hat alpha beta expression} and Lemma \ref{lemma symmetry h alpha beta}, we deduce that $h_{k}^{\alpha,\beta}$ in (\ref{h hat alpha beta equation}) is a solution to the boundary value problem $\mathcal{H}_{k}(S_{k}^{\alpha,\beta}f)$. Hence, using $h_{k}^{\alpha,\beta}$ with its corresponding boundary $S_{k}^{\alpha,\beta}f$ in \eqref{images density diff relation} gives us expression (\ref{FPT dist relation}) from Theorem \ref{theorem OU transformation}.}

\end{proof}

\begin{remark}
{The algebraic proof based on the Lie symmetry approach yields uniqueness of the family of transformations $
S_{k}^{\alpha,\beta}$ and thus of 
the relationship 
\eqref{FPT dist relation}. Indeed, 
the function $h_{k}^{\alpha,\beta}$ is the unique solution to $\mathcal{H}_{k}(S_{k}^{\alpha, \beta} f)$, and in order to satisfy the corresponding third condition of \eqref{bvps}, we can only construct further solutions by composing it with symmetry groups (\ref{symmetry OU}) such that the time component is 0 whenever $t=0$. This only leads to a change in constants $\alpha$ or $\beta$, making $S_{k}^{\alpha,\beta}$ the only non-trivial transformation leading to \eqref{FPT dist relation}. Note that, one could use symmetry $h^{(4)}_{k, \epsilon}$ to add an extra trend to the $S_{k}^{\alpha,\beta}$ transformation, but the form of the transformation would stay the same.} 
\end{remark}
\begin{remark}
 When we take the limit as $k$ goes to 0 of the symmetries in \eqref{symmetry OU}, we only recover the first three symmetries of the heat equation, found in Section 3.3 of \cite{Alili patie ode}. The rest of the symmetries of the heat equation can be recovered by using a combination of the symmetries in \eqref{symmetry OU}.
\end{remark}

\begin{remark}
    This Lie approach led to a variant of our  boundary crossing identity (15) in \cite{Dmitry Lie} (their equation 40) up to some misprints therein. In particular, by setting $\beta' = \beta/2k \alpha, \alpha' = 1/\alpha^2 - \beta/2k \alpha -1, A=k^2, B=0, \nu=0, x_0=0$ and $\theta_{\mathcal{F}_2}(x)= e^{-kx^2/2}$, we noted that the first term in (40) should be $\mathcal{T}_-^{-1}\mathcal{T}_+^{-1/2}\sqrt{\alpha'+\beta'+1}$ instead of $\mathcal{T}_+^{-1/2}/\sqrt{\alpha'+\beta'+1}$.
\end{remark}

\section{Asymptotic behaviour of the FPT densities} \label{section asymptotic behaviour of FPT densities}

Here, we discuss the asymptotic behaviour of the FPT distribution of $T_{k}^{S_{k}^{\alpha,\beta}f}$ {and the transience  of the transformed curve $S_{k}^{\alpha,\beta}f$, i.e.,  $\mathbb{P}(T_{k}^{S_{k}^{\alpha,\beta}f} < \infty)<1$}. As before, we assume that our boundary $f \in \mathcal{C}^1([0, \infty), \mathbb{R}^+), f(0)\neq 0$ and only consider $\alpha>0$, as discussed before Section \ref{Section5.1}. Write $\mathbb{P}(T_{k}^f \in dt) = p_k^f(t)dt$. {If $\zeta_{k, \alpha, \beta} = \infty$}, from Theorem \ref{theorem OU transformation}, we get the following asymptotic identity
\begin{equation} \label{general asymp formula} p_k^{S_{k}^{\alpha,\beta}f}(t)\sim  e^{2kt} \alpha^2 \frac{(\alpha \beta r(t)+1)^{-1/2}}{2k \alpha^2 r(t)+ \alpha \beta r(t)+1}  e^{- \frac{\alpha \beta }{2(\alpha \beta r(t)+1)} (S_{k}^{\alpha,\beta}f(t))^2 \; e^{2kt}}  p_k^{f} \left(s\left(\Gamma_{\alpha,\beta}\right)\right)  \; \; \text{as} \; t \to \infty,
\end{equation}
{
where 
$$
\Gamma_{\alpha,\beta} := \begin{cases}
\frac{\alpha}{\beta} & \text{if} \quad \beta >0, \; k \geq 0;\\   
\frac{\alpha^2}{\alpha \beta - 2 k} &\text{if} \quad \beta> \frac{2k}{\alpha} - 2k \alpha,\; k<0,\\   
\end{cases}
$$
}
and $h(t) \sim I(t)$ as $t \to \zeta$ denotes that $h(t)/I(t) \to 1$ as $t \to \zeta$, for some $\zeta \in [0,\infty]$. 
\newline
We now give a necessary and sufficient condition for a curve to be transient {for the OU process.}

{\begin{theorem}
   $S_{k}^{\alpha, \beta}f$ is transient in the following two cases
\begin{enumerate}[label=(\roman*)]
    \item  $\zeta_{k, \alpha,\beta}<\infty$;
    \item  $\zeta_{k, \alpha,\beta}=+ \infty$ and $0< \left(\beta - \frac{2k}{\alpha} \mathbbm{1}_{\{k<0\}} \right) \sqrt{2k\Gamma_{\alpha,\beta}+1} f(s(\Gamma_{\alpha,\beta})) < \infty$.
\end{enumerate}
    \end{theorem}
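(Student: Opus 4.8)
The plan is to translate the OU transience statement into a boundary–crossing question for a standard BM, using the time–change identity $T_k^{S_k^{\alpha,\beta}f}=s\bigl(T^{S^{\alpha,\beta}\Lambda_k f}\bigr)$ recorded after \eqref{OU brownian stopping time relation}. Writing $g:=S^{\alpha,\beta}\Lambda_k f$ for the associated BM boundary and recalling that $s$ maps its domain bijectively onto $[0,\infty)$, I would first note that $\mathbb{P}\bigl(T_k^{S_k^{\alpha,\beta}f}<\infty\bigr)=\mathbb{P}\bigl(T^{g}<H\bigr)$, where $H$ is the horizon of the BM clock $r$: since $r(t)\uparrow\infty$ for $k\ge0$ and $r(t)\uparrow\zeta^{(k)}=-1/2k$ for $k<0$, one has $H=+\infty$ exactly when $k\ge0$ and $\zeta_{k,\alpha,\beta}=+\infty$, while $H<\infty$ in every other case. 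Transience is thus equivalent to $W$ staying strictly below the positive continuous curve $g$ on $[0,H)$ with positive probability, and everything reduces to the size of $H$ and the asymptotics of $g$.

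The finite–horizon regime $H<\infty$ covers all of case (i) and, because the OU process is explosive, the whole of case (ii) when $k<0$. Here the argument is robust and needs no fine boundary analysis: since $g>0$ is continuous on $[0,H)$ with $H<\infty$, for small $\delta>0$ the event on which $W_u<g(u)$ for $u\le\delta$ and $W_u<0$ for all $u\in[\delta,H)$ has positive probability, and on it $W_u<g(u)$ throughout (for $u\ge\delta$ one has $W_u<0<g(u)$, as $g$ stays strictly positive on the open interval). Hence $\mathbb{P}(T^{g}<H)<1$. In case (ii) with $k<0$ the hypothesis $0<(\beta-\tfrac{2k}{\alpha})\sqrt{2k\Gamma_{\alpha,\beta}+1}\,f(s(\Gamma_{\alpha,\beta}))<\infty$ is exactly what forces $g$ to extend continuously and positively to the compact $[0,\zeta^{(k)}]$; a short computation gives $g(u)\to\tfrac{1}{-2k}(\beta-\tfrac{2k}{\alpha})\sqrt{2k\Gamma_{\alpha,\beta}+1}f(s(\Gamma_{\alpha,\beta}))$ as $u\uparrow\zeta^{(k)}$, which keeps us inside the $\zeta_{k,\alpha,\beta}=+\infty$ regime rather than slipping into case (i).

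The substantive case is case (ii) with $k\ge0$, where $H=+\infty$ and the OU is recurrent to bounded curves, so transience genuinely requires growth of the boundary. I would read off the asymptotics of $g$ from $\theta(t):=\tfrac{\alpha^2 r(t)}{1+\alpha\beta r(t)}\uparrow\Gamma_{\alpha,\beta}=\alpha/\beta$: the inner argument of $\Lambda_k f$ stabilises at $\Gamma_{\alpha,\beta}$ while the prefactor grows, $\tfrac{1+\alpha\beta u}{\alpha}\sim\beta u$, giving the linear growth $g(u)\sim\beta\sqrt{2k\Gamma_{\alpha,\beta}+1}\,f(s(\Gamma_{\alpha,\beta}))\,u$. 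The hypothesis (with $\mathbbm{1}_{\{k<0\}}=0$) makes this slope $c:=\beta\sqrt{2k\Gamma_{\alpha,\beta}+1}f(s(\Gamma_{\alpha,\beta}))$ positive and finite. Since $c>0$ one can fit a line $a_0+c'u$ below $g$: take $c'>0$ small so that $m:=\inf_u\bigl(g(u)-c'u\bigr)>0$ (possible because $\inf_u g>0$ and $g(u)-c'u\to+\infty$), then $a_0\in(0,m)$, and $\mathbb{P}(W_u<a_0+c'u\ \forall u)=1-e^{-2a_0c'}>0$ yields $\mathbb{P}(T^{g}<\infty)<1$.

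I expect the main obstacle to be this last, infinite–horizon case: one must justify the linear asymptotics of $g=S^{\alpha,\beta}\Lambda_k f$ and check that its slope coincides with the factor displayed in the theorem, and then supply the crossing estimate for a boundary of positive linear growth — equivalently, invoke the OU analogue of the Kolmogorov–Erdős–Petrowski test developed in this section in place of the ad hoc linear comparison above. A secondary burden is the bookkeeping over the signs of $(k,\alpha,\beta)$ that determines $H$ and the sign of $\alpha\beta$. I would also note a purely analytic shortcut: integrating \eqref{FPT dist relation} and substituting $v=s(\theta(t))$ rewrites $\mathbb{P}(T_k^{S_k^{\alpha,\beta}f}<\infty)$ as $\int_0^{s(\Gamma_{\alpha,\beta})}\mathcal{W}(v)\,p_k^f(v)\,dv$ over the bounded interval $[0,s(\Gamma_{\alpha,\beta}))$; when $\alpha\beta\ge0$ the Gaussian weight satisfies $\mathcal{W}\le1$ and transience is immediate from $\mathbb{P}(T_k^f\le s(\Gamma_{\alpha,\beta}))<1$, but since $\mathcal{W}>1$ when $\alpha\beta<0$ the sample–path argument above is the one that works uniformly.
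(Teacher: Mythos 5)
Your proposal is correct, and for the substantive case it takes a genuinely different route from the paper. Both arguments begin the same way, reducing to a Brownian boundary-crossing problem via the time change \eqref{OU brownian stopping time relation}, i.e. $T_k^{S_k^{\alpha,\beta}f}=s\bigl(T^{S^{\alpha,\beta}\Lambda_k f}\bigr)$. For case (i) the paper disposes of the claim in one sentence (positive probability of never hitting the curve); your finite-horizon sample-path argument is that sentence made rigorous, and you use the same device to absorb the $k<0$ part of case (ii), which is a genuine clarification: since $r(t)\uparrow -1/(2k)$ when $k<0$, the Brownian clock has a finite horizon there, whereas the paper's integral test $\int_1^\infty\cdots\,dz$ implicitly treats the Brownian time axis as infinite and so is really only apposite for $k\ge 0$. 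For case (ii) with $k\ge 0$ the paper verifies the Kolmogorov--Erd\H{o}s--Petrovski criterion for $\Lambda_k S_k^{\alpha,\beta}f$, checking the monotonicity hypothesis and the convergence of the integral via the same stabilising quantity $g$; you instead extract the linear asymptotics $S^{\alpha,\beta}\Lambda_k f(u)\sim cu$ with $c=\beta\sqrt{2k\Gamma_{\alpha,\beta}+1}\,f(s(\Gamma_{\alpha,\beta}))>0$ and compare with a straight line, concluding from $\mathbb{P}\bigl(\sup_u(W_u-c'u)<a_0\bigr)=1-e^{-2a_0c'}>0$. This is more elementary (no appeal to the KEP test or its regularity hypotheses) and proves exactly what the theorem asserts, namely sufficiency; what it gives up is the sharper iff flavour of the integral test that the paper's surrounding discussion alludes to. One small point to tighten: the choice of $c'$ with $\inf_u\bigl(g(u)-c'u\bigr)>0$ should be done in two steps (first fix $c_0'<c$ to force $g(u)-c_0'u\ge\inf g$ beyond some $M$, then shrink $c'$ further so that $c'M<\inf g$), but this is routine and does not affect the argument.
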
}

{\begin{proof}
 In case $(i)$, there is always a positive probability of the OU process never hitting the curves, making them transient. We now proceed with proving the result for case (ii). By relation (\ref{OU brownian stopping time relation}), we deduce that
\[
\mathbb{P}(T_{k}^f < \infty) = \mathbb{P}(s(T^{\Lambda_{k} f})< \infty) =  \mathbb{P}(T^{\Lambda_{k} f}< \infty).
\]
Now, by the classical Kolmogorov-Eröds-Petrovski theorem reported in \cite{Erdos}, if $t^{-1/2} \Lambda_{k} f(t)$  (i.e.  $f(t) e^{kt}/\sqrt{r(t)}$) is increasing for sufficiently large $t$, then $\Lambda_{k} f$ is a transient curve for the Brownian motion if and only if
\begin{equation} \label{KPE OU integral test}
    \int_{1}^{\infty} t^{-3/2} \Lambda_{k} f(t) e^{- (\Lambda_{k} f)^2/2t} dt =  \int_{s(1)}^{\infty} \frac{e^{3kt}f(t)}{r(t)^{3/2}} e^{- e^{2kt} f(t)^2/2r(t)} dt < \infty.
\end{equation}
        Since $\zeta_{k, \alpha, \beta}= + \infty$, note that
\begin{equation} \label{asymptotic g function}
   g(t) : = e^{kt} S_{k}^{\alpha,\beta}f(t)/r(t) \sim \left(\beta - \frac{2k}{\alpha} \mathbbm{1}_{\{k<0\}} \right) \sqrt{2k(\Gamma_{\alpha,\beta})+1} f(s(\Gamma_{\alpha,\beta})) > 0 \quad \text{as}  \; \; t \to \infty.
\end{equation}
 Furthermore, because of (\ref{asymptotic g function}),  $S_{k}^{\alpha,\beta}f(t) e^{kt}/\sqrt{r(t)}$ is increasing for sufficiently large $t$, and under the condition $0< \left(\beta - \frac{2k}{\alpha} \mathbbm{1}_{\{k<0\}} \right) \sqrt{2k\Gamma_{\alpha,\beta}+1} f(s(\Gamma_{\alpha,\beta})) < \infty$, we have
\begin{eqnarray*}
    \int_{s(1)}^{\infty} \frac{e^{3kt}S_{k}^{\alpha,\beta}f(t)}{r(t)^{3/2}} e^{- e^{2kt} (S_{k}^{\alpha,\beta}f(t))^2/2r(t)} dt &=& \int_{1}^{\infty} \frac{1}{\sqrt{z}} g(s(z)) e^{- g(s(z))^2 z/2} dz \\
    &<&  \int_{1}^{\infty}  g(s(z)) e^{- g(s(z))^2 z/2} dz < \infty.
\end{eqnarray*}
The last integral is finite because $g$ eventually stabilises and the assumptions ensure that $g$ never blows up or vanishes, so its minimum and the maximum are always attained. Hence by the integral test (\ref{KPE OU integral test}), $S_{k}^{\alpha,\beta}f$ is transient.
    \end{proof}}
 {Now, we derive asymptotics for when $\zeta_{k, \alpha, \beta} <  \infty$. Notice that in this case,} $s\left( \frac{ \alpha^2 r(t)}{1 + \alpha \beta r(t)} \right) \to +\infty$ as $t \to \zeta_{k, \alpha, \beta}$ and whenever $f$ is transient and satisfies suitable conditions, we can derive the asymptotic expression of the FPT density of the OU {hitting curves $S_{k}^{\alpha, \beta}f$} by making use of the corresponding result for the BM by Anderson and Pitt [4], as proved in the following Proposition.

\begin{proposition} \label{anderson pitt OU}
    Assume that $f$ is transient and satisfies the following conditions
    \begin{enumerate}[label=(\roman*)]
        \item $\Lambda_{k} f$ is increasing, concave, twice differentiable on $(0,\infty)$ and of regular variation at $\infty$ with index ${a} \in [1/2,1)$,
        \item $\Lambda_{k} f(t)/\sqrt{t}$ is increasing at $\infty$, and $\Lambda_{k} f(t)/t$ is convex and decreases to 0 for sufficiently large t,
        \item There exist positive constants $c<1$ and $c'$ such that the inequalities \\ $t f'(s(t)) \leq (c(2kt+1)-kt) f(s(t))$ and $| t^2/(2kt+1)^2 (f''(s(t))-k^2 f(s(t))) | \leq c' f(s(t))$ are met for sufficiently large t.
      \end{enumerate}
      Then, {if $\zeta_{k, \alpha, \beta} < \infty$}, we have
    \[
    p_k^{S_{k}^{\alpha,\beta}f} (t) \sim \sqrt{\frac{|\alpha \beta|^3}{2 \pi}} \left(\frac{2k}{|\alpha \beta|} + 1\right) (1-r) \Tilde{f}(t) \quad \text{as} \; \; t \to \zeta_{k, \alpha,\beta},
    \]
where
\[
\Tilde{f}(t) = \frac{ \left(\frac{ k\alpha^2 r(t)}{1 + \alpha \beta r(t)}+1\right)f\left(s\left( \frac{ \alpha^2 r(t)}{1 + \alpha \beta r(t)} \right)\right) - \left( \frac{ \alpha^2 r(t)}{1 + \alpha \beta r(t)}\right)f'\left(s\left( \frac{ \alpha^2 r(t)}{1 + \alpha \beta r(t)} \right)\right)}{\sqrt{2k  \frac{ \alpha^2 r(t)}{1 + \alpha \beta r(t)}+1}}.
\]

\end{proposition}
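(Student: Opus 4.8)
The plan is to reduce Proposition~\ref{anderson pitt OU} to the Brownian asymptotics of Anderson and Pitt for the boundary $g:=\Lambda_k f$, via the time change encoded in $\Lambda_k$ together with relation (\ref{OU brownian stopping time relation}), and then to transport the estimate back to the OU time scale. Concretely, since $T_k^{S_k^{\alpha,\beta}f}=s(T^{S^{\alpha,\beta}\Lambda_k f})$ by (\ref{OU brownian stopping time relation}), differentiating the induced law identity exactly as in (\ref{f hat to g alpha}) gives
\[
p_k^{S_k^{\alpha,\beta}f}(t)=e^{2kt}\,p^{S^{\alpha,\beta}g}(r(t)),\qquad g:=\Lambda_k f .
\]
One then checks that letting $t\uparrow\zeta_{k,\alpha,\beta}$ is equivalent to driving the inner argument $u:=\tfrac{\alpha^2 r(t)}{1+\alpha\beta r(t)}$ so that $s(u)\to\infty$; this is precisely the regime in which only the transient (large-argument) behaviour of $f$, and hence of $g$, enters.

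Next I would translate the hypotheses. Conditions (i)--(ii) are stated directly on $g=\Lambda_k f$ and are exactly the monotonicity, convexity and regular-variation requirements of the Brownian theorem. For condition (iii) the chain rule, using $\Lambda_k f(t)=\sqrt{2kt+1}\,f(s(t))$ and $s'(t)=(2kt+1)^{-1}$, gives
\[
g'(t)=\frac{k f(s(t))+f'(s(t))}{\sqrt{2kt+1}},\qquad
g(t)-t\,g'(t)=\frac{(kt+1)f(s(t))-t f'(s(t))}{\sqrt{2kt+1}},
\]
and a second differentiation expresses $g''$ through $f''$. The two inequalities in (iii) then become precisely the sign constraint on $g'$ and the two-sided bound on the curvature term required by Anderson--Pitt. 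In particular, evaluating at $t=u$ identifies $g(u)-u\,g'(u)=\tilde f(t)$, so the factor $\tilde f$ in the statement is nothing but the Brownian expression $g-u\,g'$ read through $\Lambda_k$.

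I would then invoke the Brownian estimate of Anderson and Pitt for the transformed boundary, of the form
\[
p^{S^{\alpha,\beta}g}(\tau)\sim \sqrt{\frac{|\alpha\beta|^3}{2\pi}}\,(1-a)\,\bigl(g(u)-u\,g'(u)\bigr),\qquad u=\tfrac{\alpha^2\tau}{1+\alpha\beta\tau},
\]
as $\tau$ approaches the relevant singularity, where $a$ is the regular-variation index of (i). Setting $\tau=r(t)$ and using $g(u)-u\,g'(u)=\tilde f(t)$ supplies everything except the prefactor $e^{2kt}$; since $\zeta_{k,\alpha,\beta}$ is finite, $e^{2kt}\to e^{2k\zeta_{k,\alpha,\beta}}$, and a short computation with $e^{2ks(x)}=2kx+1$ shows this limit contributes the constant $\tfrac{2k}{|\alpha\beta|}+1$. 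Assembling the factors yields the claimed equivalence.

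The main obstacle is the second step: faithfully matching the conditions (i)--(iii) to the precise regularity hypotheses that Anderson--Pitt impose, since those are formulated for Brownian boundaries and must be carried through the nonlinear map $\Lambda_k$, which couples the algebraic factor $\sqrt{2kt+1}$ to the logarithmic reparametrisation $s$. Preservation of the regular-variation index under $\Lambda_k$ and the exact form of the second-derivative bound are the delicate points; moreover the geometry differs between $k\ge 0$, where $S^{\alpha,\beta}g$ genuinely blows up as $1+\alpha\beta r(t)\to 0$, and $k<0$, where instead the inner argument $u$ reaches the finite edge of the domain of $g$, so the two cases should be verified separately. Once the hypotheses are secured, the remaining prefactor bookkeeping---including the finite limit of $e^{2kt}$---is routine.
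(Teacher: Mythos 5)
Your proposal follows essentially the same route as the paper: apply Anderson--Pitt to the Brownian boundary $\Lambda_k f$, transport the estimate through the time change via (\ref{OU brownian stopping time relation}), and feed it into the boundary crossing identity (you use the Brownian-level identity (\ref{BM FPT relation}) after conjugating by $\Lambda_k$, the paper uses the OU-level identity (\ref{FPT dist relation}); these coincide since $S_k^{\alpha,\beta}=\Lambda_k^{-1}\circ S^{\alpha,\beta}\circ\Lambda_k$), and your identifications $\Tilde{f}(t)=g(u)-u\,g'(u)$ and $e^{2k\zeta_{k,\alpha,\beta}}=2k/|\alpha\beta|+1$ are correct. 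The one step your sketch leaves implicit is that the Gaussian factor $e^{-g(u)^2/2u}$ from Anderson--Pitt and the exponential prefactor of the boundary crossing identity combine into $\exp\bigl(-\tfrac{\alpha^2 r(t)}{2}(g(u)/u)^2\bigr)\to 1$, which is precisely where hypothesis (ii) ($\Lambda_k f(t)/t\downarrow 0$) enters — the paper makes this cancellation explicit.
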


\begin{proof}

{
By Theorem 1 of \cite{Anderson pitt}, if conditions (i)-(iii) are satisfied, we can write
\[
    p^{\Lambda_{k} f}(t)\sim  (1-r) \frac{\Lambda_{k} f(t) - t (\Lambda_{k} f(t))'}{\sqrt{2 \pi} t^{3/2}} e^{-(\Lambda_{k} f(t))^2/2t} \quad \text{as} \; \; t \to \infty.
\]
Using (\ref{OU brownian stopping time relation}), we get $p_k^f(t) = e^{2kt} p^{\Lambda_{k} f} (r(t))$. 
We then simplify the expression and combine it with (\ref{FPT dist relation}). The result follows by noticing that the assumption $\Lambda_{k} f(t)/t \downarrow 0$ as $t \to \infty$ forces the exponential terms to vanish. 
}
\end{proof}


\begin{remark}
We noted a misprint in Section 4.3 of \cite{Alili patie time inversion}. For the Brownian motion, when $\beta>0$, the one parameter family of transformed functions $S^{1, \beta}f$ is always transient whenever $t^{-1/2} S^{1, \beta}f$ is increasing for sufficiently large t, $0< \beta f(\alpha/\beta)< \infty$ and $f(0)>0$.
\end{remark}

\section{Interpretation via the method of images} \label{section method of images}

In \cite{Lerche}, the author did a thorough investigation of the method of images for the standard BM. Here, we apply the method of images to the OU process and use it to produce new examples of curves with explicit FPT densities. As in the BM case, we would like to construct a function $h_k$ satisfying the OU Fokker-Planck equation. Then, by the uniqueness of such solutions, as seen in Section \ref{section lie proof}, $h_k$ would also satisfy (\ref{OU BM probability solution representation}) and be the solution to $\mathcal{H}_{k}(f)$. We proceed as follows. First, we assume to have a positive $\sigma-$finite measure $F$ with $\int_{0}^{\infty} \phi(\sqrt{\epsilon \theta})F(d \theta)<\infty$ for all $\epsilon>0$. Then, for $a>0$, we define the $h_k$ function by
\[
 \begin{aligned}
       h_{k}(x,t) & := p_{t}(0,x) - \frac{1}{a} \int_{0}^{\infty} p_{t}(\theta,x) F(d \theta) 
       =  \frac{e^{kt}}{\sqrt{r(t)}} \phi\left( \frac{x e^{kt}
    }{ \sqrt{r(t)}} \right) - \frac{1}{a} \int_{0}^{\infty} \frac{e^{kt}}{\sqrt{r(t)}}\phi\bigg(\frac{x e^{kt} - \theta }{\sqrt{r(t)}}\bigg) F(d \theta).
 \end{aligned}
\]
We know that $ h_{k}$ vanishes on the boundary $f$, and so by simplifying, we get that $ h_{k}(x,t)=0$ is equivalent to
\begin{equation} \label{h equation simplified method of images}
   l\left(\frac{x e^{kt}}{r(t)}, \frac{1}{r(t)}\right) = a,
\end{equation}
where $l(y,s) = \int_{0}^{\infty} e^{\theta y - \frac{1}{2} \theta^2 s} F(d \theta)$. 

The following lemma gives us a characterisation of the boundaries obtained through the method of images.
\begin{lemma}[Characterisation of boundaries] \label{lemma characterisation}
The boundaries $f$ obtained through the method of images have the following properties:

\begin{enumerate}
    \item $f$ is infinitely often continuously differentiable;
    \item $\Lambda_{k} f(t)/t$ (i.e. $f(t) e^{kt}/r(t)$) is monotone decreasing;
    \item $f''(t) - k^2 f(t) \leq 0$.
\end{enumerate}

\end{lemma}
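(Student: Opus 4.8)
The plan is to recognise that a boundary $f$ arising from the OU method of images is nothing but the $\Lambda_k$-preimage of a boundary arising from the \emph{Brownian} method of images, and then to split the three properties into those that read off directly from the defining equation \eqref{h equation simplified method of images} and the one that must be imported from Lerche's Brownian characterisation in \cite{Lerche}. Concretely, writing $g:=\Lambda_k f$, so that $g(\tau)=e^{ks(\tau)}f(s(\tau))$ and $e^{kt}f(t)=g(r(t))$ (using $s(r(t))=t$), the equation \eqref{h equation simplified method of images}, namely $l\!\left(f(t)e^{kt}/r(t),\,1/r(t)\right)=a$, becomes $l\!\left(g(\tau)/\tau,\,1/\tau\right)=a$ at $\tau=r(t)$. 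This is exactly the zero-set equation of the Brownian method-of-images function driven by the same measure $F$, i.e. the boundary-level manifestation of the solution correspondence \eqref{OU BM solution relation}. Hence $g$ is a genuine Brownian method-of-images boundary.

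First I would obtain properties (1) and (2) directly from \eqref{h equation simplified method of images}. The integrability hypothesis $\int_0^\infty \phi(\sqrt{\epsilon\theta})\,F(d\theta)<\infty$ together with the factor $e^{-\theta^2/2r(t)}$ (which decays faster than any polynomial in $\theta$ for $t>0$) legitimises differentiation under the integral, so $l$ is $C^\infty$ on its domain, with
\[
\partial_1 l=\int_0^\infty \theta\,e^{\theta y-\theta^2 w/2}\,F(d\theta)>0,\qquad
\partial_2 l=-\tfrac12\int_0^\infty \theta^2\,e^{\theta y-\theta^2 w/2}\,F(d\theta)<0 .
\]
Since $\partial_1 l>0$, the implicit function theorem applied to $l(f(t)e^{kt}/r(t),1/r(t))=a$ yields that $t\mapsto f(t)e^{kt}/r(t)$, and hence $f$, is $C^\infty$, giving (1). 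For (2), differentiating the level relation gives $d\big(f e^{kt}/r\big)/d\big(1/r\big)=-\partial_2 l/\partial_1 l>0$; as $t$ increases $1/r(t)$ decreases, so $f(t)e^{kt}/r(t)=g(r(t))/r(t)$ decreases, which is exactly the assertion that $\Lambda_k f(\tau)/\tau$ is monotone decreasing.

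It remains to prove (3), the concavity-type inequality. Here I would invoke Lerche's result that the Brownian method-of-images boundary $g$ is concave, i.e. $g''\le 0$, and transport it through $f=\Lambda_k^{-1}g$, that is $f(t)=e^{-kt}g(r(t))$. Differentiating twice with $r'(t)=e^{2kt}$, the first-order cross terms cancel and one is left with the clean identity
\[
f''(t)-k^2 f(t)=e^{3kt}\,g''\!\left(r(t)\right).
\]
Since $e^{3kt}>0$ and $r$ is a bijection onto its range, $g''\le 0$ is equivalent to $f''-k^2 f\le 0$, which is (3).

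The main obstacle is property (3): unlike (1) and (2), it is not visible from a single differentiation of \eqref{h equation simplified method of images}, and its Brownian counterpart (concavity of $g$) is the substantive content of Lerche's analysis, which I would cite rather than reprove. The accompanying routine—but not entirely mechanical—points are verifying the correspondence $g=\Lambda_k f$ rigorously and carrying out the two-fold chain rule that produces the identity $f''-k^2 f=e^{3kt}g''(r(t))$, together with the justification of differentiation under the integral sign that underpins the smoothness and the sign computations.
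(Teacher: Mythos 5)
Your proposal is correct and follows essentially the same route as the paper: identify $\Lambda_k f$ as a Brownian method-of-images boundary via the substitution $t\to s(t)$ in \eqref{h equation simplified method of images}, import Lerche's characterisation, and obtain (3) from the cancellation identity $f''(t)-k^2f(t)=e^{3kt}\,(\Lambda_k f)''(r(t))$, which matches the paper's computation exactly. The only (harmless) difference is that you rederive (1) and (2) directly from the implicit function theorem and the signs of $\partial_1 l$, $\partial_2 l$, whereas the paper simply cites Lemma 1.1 of Lerche for all three Brownian properties.
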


\begin{proof}
In Lemma 1.1 of \cite{Lerche}, it is shown that the curves $\eta(t)$ that satisfy the equation $l(\frac{\eta(t)}{t}, \frac{1}{t}) = a$ are infinitely continuously differentiable and concave, with $\eta(t)/t$  monotone decreasing. As in our case we have $a = l\left(\frac{f(t)e^{kt}}{r(t)} , \frac{1}{r(t)}\right)$, by changing $t \to s(t)$, we get $a = l\left(\frac{\Lambda_{k} f(t)}{t} , \frac{1}{t}\right)$. So  $\eta(t) := \Lambda_{k} f(t)$ must satisfy the three properties given in Lemma 1.1 of \cite{Lerche}, so (1) and (2) follow directly. For (3),
\[
f''(t) = k^2 e^{-kt}  \eta(r(t)) + e^{3kt} \eta''(r(t)) = k^2 f(t)  +  e^{3kt} \eta''(r(t))
\]
and the result follows as $\eta(t)$ is concave.
\end{proof}

\begin{remark}
From {Lemma} \ref{lemma nonlinear ODE}, if $\mu(s(\tau f)) > k^2$ and $f\geq 0$, then we get that the solutions to nonlinear ordinary differential equations (\ref{nonlinear ODE}) satisfy $f'' - k^2 f \leq 0$.
\end{remark}

The following theorem gives us a way of calculating the density of $T_{k}^f$ explicitly.

\begin{theorem} \label{theorem FPT density method of images}
The FPT density of $T_{k}^f$ is given by
\[
    \mathbb{P}(T_{k}^f \in dt)= \frac{ e^{2kt}}{2 r(t)^{3/2}} \phi\bigg(\frac{f(t)e^{kt} }{\sqrt{r(t)}} \bigg) E(\theta | (f(t),t))dt,
\]
where
\[
 E(\theta | (f(t),t)) = \frac{\int_{0}^{\infty} \theta \phi\bigg(\frac{f(t)e^{kt} - \theta}{\sqrt{r(t)}}\bigg) F(d\theta)}{\int_{0}^{\infty} \phi\bigg(\frac{f(t)e^{kt} - \theta}{\sqrt{r(t)}}\bigg) F(d \theta)}.
\]
\end{theorem}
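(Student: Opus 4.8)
The plan is to reduce everything to the differential identity \eqref{images density diff relation} established in the third proof of Theorem \ref{theorem OU transformation}, namely $p_k^f(t) = -\tfrac{1}{2}\,\tfrac{\partial}{\partial x}h_k(x,t)\big|_{x=f(t)}$. This identity applies to the present $h_k$ because, by the uniqueness of solutions to $\mathcal{H}_k(f)$ argued in Section \ref{section lie proof}, the method-of-images function constructed above is exactly the solution to the boundary value problem and therefore inherits the probabilistic representation \eqref{OU BM probability solution representation}. Thus the whole statement collapses to computing one $x$-derivative of the explicit image expression and evaluating it on the curve $f$, so no appeal to Theorem \ref{theorem OU transformation} itself is needed.

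First I would differentiate $p_t(\theta,x)=\frac{e^{kt}}{\sqrt{r(t)}}\phi\!\left(\frac{xe^{kt}-\theta}{\sqrt{r(t)}}\right)$ in $x$, using the elementary relation $\phi'(z)=-z\phi(z)$, to obtain $\frac{\partial}{\partial x}p_t(\theta,x) = -\frac{e^{2kt}(xe^{kt}-\theta)}{r(t)^{3/2}}\,\phi\!\left(\frac{xe^{kt}-\theta}{\sqrt{r(t)}}\right)$, and likewise for the $\theta=0$ term. Writing $\xi:=f(t)e^{kt}$ and evaluating the derivative of $h_k$ at $x=f(t)$ yields $\frac{\partial}{\partial x}h_k\big|_{x=f(t)} = \frac{e^{2kt}}{r(t)^{3/2}}\Big[-\xi\,\phi\!\big(\tfrac{\xi}{\sqrt{r(t)}}\big) + \tfrac{1}{a}\int_0^\infty(\xi-\theta)\,\phi\!\big(\tfrac{\xi-\theta}{\sqrt{r(t)}}\big)F(d\theta)\Big]$. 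The key simplification is then to invoke the boundary condition $h_k(f(t),t)=0$, which via \eqref{h equation simplified method of images} reads $\phi\!\big(\tfrac{\xi}{\sqrt{r(t)}}\big)=\tfrac{1}{a}\int_0^\infty\phi\!\big(\tfrac{\xi-\theta}{\sqrt{r(t)}}\big)F(d\theta)$; substituting this for the first term cancels the $\xi$-contributions exactly and leaves $\frac{\partial}{\partial x}h_k\big|_{x=f(t)} = -\frac{e^{2kt}}{a\,r(t)^{3/2}}\int_0^\infty\theta\,\phi\!\big(\tfrac{\xi-\theta}{\sqrt{r(t)}}\big)F(d\theta)$. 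Finally, eliminating $1/a$ through the same boundary relation turns the factor $\tfrac{1}{a}\int_0^\infty\theta\,\phi(\cdots)F(d\theta)$ into $\phi\!\big(\tfrac{\xi}{\sqrt{r(t)}}\big)\,E(\theta\mid(f(t),t))$, and inserting this into \eqref{images density diff relation} produces the claimed density up to the constant $\tfrac12$.

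I expect the main obstacle to be analytic rather than algebraic: justifying the interchange of $\partial/\partial x$ with the integral $\int_0^\infty(\cdots)F(d\theta)$, since $F$ is only assumed $\sigma$-finite. This is where the standing hypothesis $\int_0^\infty\phi(\sqrt{\epsilon\theta})\,F(d\theta)<\infty$ for all $\epsilon>0$ must be used, together with the Gaussian decay of $\phi$ and its derivative, to dominate the differentiated integrand locally uniformly in $x$ near $f(t)$ and thereby license differentiation under the integral sign. A secondary point worth recording is verifying that the image $h_k$ genuinely solves all three conditions of $\mathcal{H}_k(f)$ (the Fokker-Planck PDE holds term by term since each $p_t(\theta,\cdot)$ does, the boundary vanishing is \eqref{h equation simplified method of images}, and the initial condition follows from $\phi$ being an approximate identity), so that the uniqueness-based identification underpinning \eqref{images density diff relation} is legitimate; this, however, is routine once the integrability is in hand.
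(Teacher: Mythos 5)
Your proposal is correct and follows exactly the paper's route: the paper's proof consists of the single line ``use relation \eqref{images density diff relation}'', and your differentiation of the image function, cancellation via the boundary condition $h_k(f(t),t)=0$, and elimination of $1/a$ are precisely the computation left implicit there. You in fact supply more detail than the paper does, including the justification for differentiating under the integral sign.
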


\begin{proof}
 We just use relation (\ref{images density diff relation}) to get the result.
\end{proof}
We now consider some examples of the boundaries that arise for specific measures $F$ via Theorem \ref{theorem FPT density method of images}.
\begin{example} \label{example images OU 1}
Consider $F(d\theta) = \delta_{2z}$ for some $z>0$. Using $h_k(f(t),t)=0$ and simplifying, we get
\[
 f(t) = \frac{\ln{(a)}}{2 z k} \sinh{(kt)} + z e^{-kt}.
\]
By Theorem \ref{theorem FPT density method of images},
   \[
            \mathbb{P}(T_{k}^f \in dt)= \frac{ z e^{2kt}}{ r(t)^{3/2}} \phi\left(\frac{\frac{\ln(a)}{2z}r(t) +z }{\sqrt{r(t)}} \right) dt.
        \]
\end{example}
\begin{remark}
In \cite{Buonocore OU hyperbolic boundary}, the FPT of a mean-reverting OU with parameter $\mu$ hitting a hyperbolic type boundary of the form $\mu + A e^{kt} + B e^{-kt}$ is studied. This reduces to the FPT of a standard OU hitting a curve of the form $A e^{kt} + B e^{-kt}$ for arbitrary constants A and B which is the curve given in this example. Also, applying the $S_{k}$ transformation to these curves gives the same family of curves with different constant coefficients.
\end{remark}
If the support of the measure $F$ is on $\mathbb{R}$ and $F(\{0\})=0$, then we define 
\[
  h_{k}(x,t) := p_{t}(0,x) - \frac{1}{a} \int_{-\infty}^{\infty} p_{t}(\theta,x) F(d \theta).
\]
Then, there exist positive and negative valued functions $f_{+}$ and $f_{-}$, with $f_{-} < f_{+}$ and the properties $ h_{k}(f_{+}(t),t)=0$ and $ h_{k}(f_{-}(t),t)=0$ for all $t < t_a$ for a certain $t_a \leq \infty$. Then, the stopping time can also be defined as the first exit time from the region $(f_{-}(t), f_{+}(t)) $, i.e.
\begin{equation}\label{Ftwobounaries}
T_{k}^{f_{\pm}}= \inf \{0<t<t_a ; \;  \; U_t \notin (f_{-}(t), f_{+}(t)) \}.
\end{equation}
{Moreover, if $F$ is symmetric, then $f_+=-f_-$, and the FPT distribution of $T_{k}^{S_{k}^{\alpha,\beta}f_\pm}$ can be obtained using our main identity \eqref{FPT dist relation} 
in Theorem \ref{theorem OU transformation}, replacing  $T_{k}^f$ and $T_{k}^{S_{k}^{\alpha,\beta}f}$  with  $T_{k}^{f_\pm}$ and $T_{k}^{S_{k}^{\alpha,\beta}f_\pm}$, respectively. This follows directly by replacing the processes $B$ and $U$ with $|B|$ and $|U|$, respectively. }
We now see an example of this.
\begin{example} \label{example OU 3}
Consider  $F(d\theta) = \frac{ d \theta}{\sqrt{2 \pi}}$ on $\mathbb{R}$. Then,
\[
 f_{\pm}(t)= \pm e^{-kt} \sqrt{r(t) \ln\left(\frac{a^2}{r(t)}\right)}, \quad 0<t \leq t_a =  
 \begin{cases}
s(a^2) & \text{if } k\geq0 \text{ or } k<0, a^2<-1/(2k), \\      
  + \infty & k<0, a^2\geq -1/(2k). 
\end{cases}
\]
This is a two sided boundary, with exit time $T_{k}^{f_{\pm}}$ defined in \eqref{Ftwobounaries} and FPT density given by
\[
 \mathbb{P}(T_{k}^{f_\pm} \in dt)=  \frac{ e^{2kt} }{2 r(t)} \phi\left( \sqrt{\ln\left(\frac{a^2 }{r(t)}\right)} \right)  \sqrt{\ln\left(\frac{a^2 }{r(t)}\right)} dt.
\]
Applying the $S_{k}$ transformation (\ref{S hat transformation}) to this curve, we get
\begin{equation} \label{images last example transformation}
      S_{k}^{\alpha,\beta} f_{\pm}(t)  = 
     \pm \sqrt{r(t)} \sqrt{1+ \alpha \beta r(t)} e^{-kt} \sqrt{\ln{\bigg[\frac{a^2 (1 + \alpha \beta r(t)) }{ \alpha^2 r(t)} \bigg]}}, \quad t< \zeta_{\alpha,\beta,a},
\end{equation}

where

$$
 \zeta_{\alpha, \beta, a} = \begin{cases}
s\left(- \frac{a^2}{a^2 \alpha \beta - \alpha^2}\right) & \text{if} \quad a^2 \beta < \alpha;\\      
  + \infty & \text{otherwise}.
\end{cases}
$$

The FPT density is given by
\[
\begin{aligned}
 p_k^{S_{k}^{\alpha,\beta} f_{\pm}}(t) 
 & = e^{2kt} \alpha^2 \frac{(1+ \alpha \beta r(t))^{-3/2}}{\big(2k \frac{\alpha^2 r(t)}{1+ \alpha \beta r(t)} + 1 \big)}  e^{- \frac{\alpha \beta }{2(1 + \alpha \beta r(t))} ( S_{k}^{\alpha,\beta} f_{\pm}(t))^2 \; e^{2kt}}  \; 
 p_k^{f_{\pm}}\left(s\left( \frac{ \alpha^2 r(t)}{1 + \alpha \beta r(t)} \right) \right) \\
 & =   \frac{e^{2kt}}{\sqrt{8 \pi}} \sqrt{\frac{\ln{\bigg[\frac{a^2 (1 + \alpha \beta r(t)) }{ \alpha^2 r(t)} \bigg]}}{1+ \alpha \beta r(t)}}  e^{ - \frac{1}{2} \ln{\left[\frac{a^2 (1+\alpha \beta r(t)) }{ \alpha^2 r(t)} \right]} (\alpha \beta r(t) +1)} .
\end{aligned}
\]
Now, for example, by setting $a=1, \alpha=2, \beta=k$, we get
\[
   S_{k}^{2,k} f_{\pm}(t)  = \pm \sqrt{r(t)} \;  \sqrt{\ln{\bigg[\frac{k }{ 2 (1 - e^{-2kt})} \bigg]}},
\]
with FPT density given by
\[
  p_k^{ S_{k}^{2,k} f_{\pm}}(t) = \frac{e^{kt}}{\sqrt{8 \pi}} \sqrt{\ln{\bigg[\frac{k }{ 2 (1 - e^{-2kt})} \bigg]}} e^{ - \frac{e^{2kt}}{2}\ln{\left[\frac{k }{ 2 (1 - e^{-2kt})} \right]}}.
\]
{In Figure \ref{figure transformed closed}, we report the transformed curves (\ref{images last example transformation}) for different values of $\alpha$ and $\beta$. In particular, for $a^2\beta < \alpha$, we get closed shaped curves defined on $t \in (0,\zeta_{\alpha, \beta, a})$, approaching $0$ as $t\to 0$ or $t\to \zeta_{\alpha,\beta,0}$, see the left panel.  Otherwise, we get open curves that diverge to $\pm \infty$, see the right panel.} 
\begin{figure}[t]
    \centering
 \includegraphics[width=.8\textwidth]{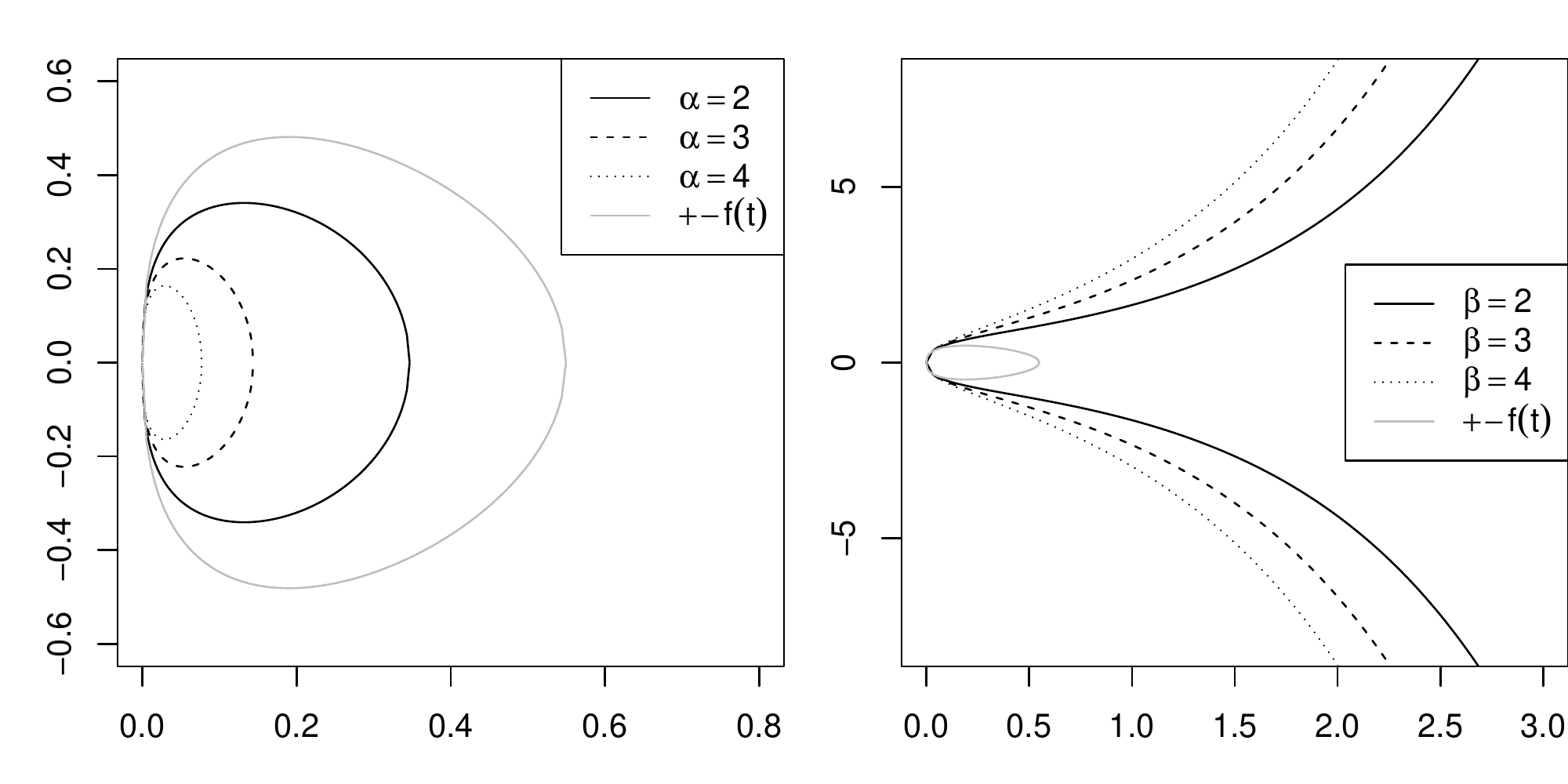}
    \caption{Transformed curves (\ref{images last example transformation}) {when $a^2\beta < \alpha$ (left panel) and $a^2\beta >  \alpha$ (right panel). In particular, we choose:  $\beta=1, k=1, a=1$ with $\alpha= 2,3,4$ in the left panel, and $k=1, a=1, \alpha = 1$ and $\beta=2,3,4$ in the right panel.}}
    \label{figure transformed closed}
\end{figure}
\end{example}

\begin{remark}

Back to our $S_{k}^{\alpha,\beta}$ transformation, using (\ref{h equation simplified method of images}) with the time-transformation $t \rightarrow s\bigg( \frac{ \alpha^2 r(t)}{1 + \alpha \beta r(t)} \bigg)$, we get
\[
   a =  \int_{0}^{\infty} e^{\frac{\theta}{\alpha} [\frac{S_{k}^{\alpha,\beta} f(t) e^{kt}}{r(t)}]  - \frac{1}{2} [\frac{(\theta/\alpha)^2}{r(t)}]}  F^{\alpha, \beta}(d\theta),
\]
where $F^{\alpha, \beta}(d\theta) =  e^{-\frac{\beta}{\alpha} \frac{\theta^2}{2} } F(d \theta) $ is the measure corresponding to the curve  $S_{k}^{\alpha,\beta} f(t)$, {for $t<\zeta_{k, \alpha,\beta}$}.

\end{remark}

\section*{Acknowledgements}
We are grateful to Dmitry Muravey for a discussion on the Lie symmetries while finalising the paper.
AA was funded by EPSRC grant [EP/V520226/1] as
part of the Warwick CDT in Mathematics and Statistics.  For the purpose of open access, the authors have applied a Creative Commons Attribution (CC BY) licence to any Author Accepted Manuscript version arising from this submission.

\addcontentsline{toc}{chapter}{Bibliography}

\end{document}